\DeclareSymbolFont{cyrletters}{OT2}{wncyr}{m}{n}
\DeclareMathSymbol{\Sha}{\mathalpha}{cyrletters}{"58}
\definecolor{refkey}{rgb}{1,1,1}
\definecolor{labelkey}{rgb}{1,1,1}
\definecolor{cite}{rgb}{0.9451,0.2706,0.4941}
\definecolor{ruri}{rgb}{0.0078,0.4022,0.8010}
\makeindex \setcounter{tocdepth}{1}
\def\F{{\rm \mathbb{F}}}
\def\Z{{\rm \mathbb{Z}}}
\def\Q{{\rm \mathbb{Q}}}
\def\v{{\rm {\bf v}}}
\def\k{{\rm {\bf k}}}
\def\G{{\rm \mathbb{G}}}
\def\C{{\rm \mathbb{C}}}
\def\T{{\rm \mathbb{T}}}
\def\P{{\rm \mathbb{P}}}
\def\m{{\rm \mathfrak{m}}}
\def\O{{\rm \mathcal{O}}}
\def\A{{\rm \mathbb{A}}}
\def\divv{{\rm div}}
\def\Aut{{\rm Aut}}
\def\Disc{{\rm Disc}}
\def\GL{{\rm GL}}
\def\Gal{{\rm Gal}}
\def\rk{{\rm rk}}
\def\Div{{\rm Div}}
\def\Hom{{\rm Hom}}
\def\End{{\rm End}}
\def\Frob{{\rm Frob}}
\numberwithin{equation}{section}
\newtheorem{theorem}{Theorem}
\newtheorem{lemma}[theorem]{Lemma}
\newtheorem{remark}[theorem]{Remark}
\newtheorem{definition}[theorem]{Definition}
\newtheorem{example}[theorem]{Example}
\newtheorem{conjecture}[theorem]{Conjecture}
\newtheorem{corollary}[theorem]{Corollary}
\newtheorem{proposition}[theorem]{Proposition}
\let\@@pmod\pmod
\DeclareRobustCommand{\pmod}{\@ifstar\@pmods\@@pmod}
\def\@pmods#1{\mkern4mu({\operator@font mod}\mkern 6mu#1)}
\begin{document}
\setlength{\parskip}{2pt}
\setlength{\parindent}{15.5pt}

\title{Sandpile groups of supersingular isogeny graphs}

\author{Nathana\"{e}l Munier}
\author{Ari Shnidman}
\address{Ari Shnidman, Einstein Institute of Mathematics, The Hebrew University of Jerusalem, Edmund J.\ Safra Campus, Jerusalem 9190401, Israel.\vspace*{-3pt}}
\email{ariel.shnidman@mail.huji.ac.il}
\begin{abstract}
Let $p$ and $q$ be distinct primes, and let $X_{p,q}$ be the $(q+1)$-regular graph whose nodes are supersingular elliptic curves over $\overline{\F}_p$ and whose edges are $q$-isogenies.  For fixed $p$, we compute the distribution of the $\ell$-Sylow subgroup of the sandpile group (i.e.\ Jacobian) of $X_{p,q}$ as $q \to \infty$.  We find that the distribution disagrees with the Cohen-Lenstra heuristic in this context. Our proof is via Galois representations attached to modular curves.  As a corollary of our result, we give an upper bound on the probability that the Jacobian is cyclic, which we conjecture to be sharp.   
\end{abstract}

\maketitle
\newcommand{\ShortTitle}{\A}
\vspace{-.25in}

\section{Introduction}
Attached to any finite undirected graph $X$ is a finite abelian group $J(X)$, called the Jacobian or sandpile group of $X$.\footnote{Other names for it are the critical group and the divisor class group. See Section \ref{sec:Jacobians} for the definition.}  One may think of $J(X)$ as the class group of the graph, in analogy with the divisor class group of an algebraic curve or the ideal class group of a number field.  

For any prime number $\ell$, let $J(X)[\ell^\infty]$ denote the subgroup of elements killed by some power of $\ell$.  As with the Cohen-Lenstra heuristics for ideal class groups \cite{CL}, we can ask about the distribution of the finite abelian $\ell$-group $J(X)[\ell^\infty]$, as $X$ varies over a countable family of graphs.\footnote{For the families we consider, the size of $J(X)$ grows, so we cannot make sense of the analogous question for the entire group $J(X)$.}  Recently, Wood \cite{WoodSandpile} has verified a Cohen-Lenstra-type heuristic for Erd\H{o}s-R\'enyi random graphs, and M\'esz\'aros proved analogous results for random regular graphs \cite{meszaros}.  

In this paper, we compute these distributions for certain families of Ramanujan graphs, namely supersingular isogeny graphs. 
To define these graphs, let $p$ and $q$ be distinct prime numbers, and  assume $p \equiv 1 \pmod{12}$, for simplicity.  Let $X_p$ be the set of isomorphism classes of supersingular elliptic curves $E$ over $\overline{\F}_p$, a set of size $n:=(p-1)/12$. See \cite[\S 5.4]{AEC} for background on supersingular elliptic curves. Each $E \in X_p$ contains $q+1$ distinct subgroups of order $q$, and hence admits $q+1$ degree $q$ isogenies $\phi \colon E \to E'$, up to isomorphism. Moreover, each $q$-isogeny $E \to E'$ admits a dual  $q$-isogeny $E'\to E$ in the opposite direction.  Let $X_{p,q}$ denote the undirected $(q+1)$-regular graph whose vertex set is $X_p$ and whose edges correspond to $q$-isogenies.  Set $J_{p,q}:=J(X_{p,q})$.    

Our main results determine the distribution of the finite abelian groups $J_{p,q}[\ell^\infty]$, when $p$ and $\ell$ are fixed and $q \to \infty$. Thus, the number of vertices is fixed, while the degree of the graph goes to infinity. Our proof makes use of the link between the graphs $X_{p,q}$ and the Galois representation attached to the modular curve $X_0(p)$, as we shall explain.  It would  be interesting to consider families where $q$ is fixed and $p \to \infty$, but this would require different methods. 

\subsection{Results}
When $p$ is fixed, the groups $J_{p,q}$ carry an extra structure which heavily influences their distribution.  Namely, they are each modules for a certain ring $\T$, called the Hecke algebra. To define $\T$, let $L_q$ be the Laplacian of $X_{p,q}$, viewed as a linear operator on the space of functions $\Hom(X_p, \Z)$. Note that as $q$ varies, the operators $L_q$ act on the same space of functions. Crucially, they commute with each other.  Define the submodule of degree-zero functions
\[M = \{f \in \Hom(X_p,\Z) \colon \sum_{E \in X_p} f(E) = 0\} ,\]
on which $L_q$ acts as well.  Then $\T$ is the commutative subring of $\End_\Z(M)$ generated by the endomorphisms $L_q$. It is known to be semisimple and of rank $n-1 = \rk_\Z M$ as a $\Z$-module, which means that $M$ is rank 1 (not necessarily free) as a $\T$-module.    The Jacobian $J_{p,q}$ is then a module for the Hecke algebra $\T$. In fact, $J_{p,q}$ is essentially the cokernel $M/L_qM$. More precisely,  for $\ell \nmid n$, we have 
\begin{equation}\label{eq:sandpile}
J_{p,q}[\ell^\infty] \simeq M_\ell/L_qM_\ell,
\end{equation}
where  $M_\ell = M\otimes_\Z \Z_\ell$ (see Proposition \ref{prop:groupstructure}).  Given this, one might naively guess that the groups $J_{p,q}[\ell^\infty]$ should be distributed as a random $\T$-module of the form $\T_\ell/a\T_\ell$, where $\T_\ell = \T \otimes_\Z \Z_\ell$ and $a \in \T_\ell$ is sampled according to Haar measure. However, this turns out to be incorrect, because it ignores the particular form of the elements $L_q$.

By definition $L_q = q+1 - T_q$, where $T_q \in \T$ is the image of the adjacency matrix of $X_{p,q}$, thought of as an element of $\End_\Z\, \Hom(X_p,\Z)$. Thus $L_q$ is (formally) the determinant of $x - 1$ acting on the free rank two $\T$-module $\T[x]/(x^2 - T_qx + q)$.  This rank two module arises naturally in the context of Hashimoto's edge-adjacency operator, whose characteristic polynomial computes the zeta function of the graph. This suggests a different model for the groups $J_{p,q}[\ell^\infty]$, namely as random cokernels of the form $\T_\ell/\det(g - 1)\T_\ell$, where $g$ is sampled from the group 
\[\G_\ell := \{g \in \GL_2(\T_\ell) \colon \det(g) \in \Z_\ell^\times\},\] 
endowed with its $\ell$-adic topology and its probability Haar measure.
 
Our first result states that for all but finitely many primes $\ell$, this is indeed the correct distribution. For an arbitrary $\T$-module $G$, define
\[\P(J_{p,q}[\ell^\infty] \simeq G) := \lim_{X \to \infty}\dfrac{\#\{q < X \colon J_{p,q}[\ell^\infty] \simeq G\}}{\#\{q < X\}},\]
which is the probability that $J_{p,q}[\ell^\infty]$ is isomorphic to $G$ as $\T$-modules (or $\T_\ell$-modules).  Let $\Disc(\T) \in \Z$ be the discriminant of the ring $\T$.
\begin{theorem}\label{thm:mainabstract}
Fix a prime $\ell$ not dividing $6n\Disc(\T)$, and let $G$ be a finite $\T_\ell$-module.  Let $\mu$ be the Haar probability measure on the group $\G_\ell$.  Then 
\[\P(J_{p,q}[\ell^\infty] \simeq G)  = \mu(g \in \G_\ell \colon \T_\ell/\det(g - 1)\T_\ell \simeq G).\]
\end{theorem}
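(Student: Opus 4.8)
The plan is to realize the reduction mod $q$ of a suitable Galois representation as the operator $T_q$, and then apply an equidistribution theorem to spread out the Frobenius elements uniformly in a compact group. More precisely, the Eichler–Shimura relation identifies $T_q$, acting on $M_\ell$, with $\Frob_q + q\,\Frob_q^{-1}$ on the $\ell$-adic Tate module of the Jacobian of $X_0(p)$, where $\Frob_q \in \Gal(\overline{\Q}/\Q)$ is (a lift of) Frobenius at $q$; equivalently, letting $\rho_\ell \colon \Gal(\overline{\Q}/\Q) \to \GL_2(\T_\ell)$ be the Galois representation on that Tate module (with its natural $\T_\ell$-module structure), we have $g_q := \rho_\ell(\Frob_q) \in \G_\ell$, with $\det(g_q) = q \in \Z_\ell^\times$ (using $\ell \neq q$, and $\ell \nmid n$ so that $\rho_\ell$ is unramified away from $p\ell$). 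The characteristic polynomial of $g_q$ on the rank-two $\T_\ell$-module is $x^2 - T_q x + q$, so $\det(g_q - 1) = 1 - T_q + q = L_q$. Combining this with the isomorphism \eqref{eq:sandpile}, we get $J_{p,q}[\ell^\infty] \simeq \T_\ell/\det(g_q-1)\T_\ell$ for all $q \nmid 6n\ell\Disc(\T)$, reducing the theorem to understanding how $g_q$ equidistributes as $q \to \infty$.

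Next I would determine the image of $\rho_\ell$. The key input is that the mod-$\ell$ representation $\bar\rho_\ell$, and in fact the $\ell$-adic representation, has image as large as possible given the constraint $\det \in \Z_\ell^\times$ (i.e.\ the cyclotomic character): since $\ell \nmid \Disc(\T)$, the algebra $\T_\ell$ is a product of unramified extensions of $\Z_\ell$, and one shows (using $\ell \nmid 6n$, a big-image result for the Galois action on modular Jacobians, and the fact that the residual representation is absolutely irreducible and not dihedral) that $\rho_\ell$ surjects onto $\G_\ell$. This is the step I expect to be the main obstacle: pinning down exactly which primes $\ell$ must be excluded, and checking that for the remaining $\ell$ the image is literally all of $\G_\ell$ rather than a proper open subgroup — this requires a careful analysis of the reduction of $\T_\ell$ into its local factors and the corresponding product of $\GL_2$'s over residue fields, and an appeal to irreducibility/non-CM properties of the relevant Hecke eigenforms of level $p$.

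Finally, with surjectivity of $\rho_\ell$ onto the compact group $\G_\ell$ in hand, I would invoke the Chebotarev density theorem: the Frobenius elements $\{g_q : q \text{ prime}, q \nmid p\ell\}$ equidistribute in $\G_\ell$ with respect to Haar measure $\mu$, where we use that the set of $q$ we must additionally exclude (those dividing $6n\Disc(\T)$, a fixed finite set) has density zero and so does not affect the limit. Therefore, for any finite $\T_\ell$-module $G$, the conjugation-invariant set $S_G = \{g \in \G_\ell : \T_\ell/\det(g-1)\T_\ell \simeq G\}$ (invariant because conjugate $g$ give the same cokernel) has boundary of measure zero — it is a union of fibers of the continuous, locally constant map $g \mapsto \det(g-1)$ composed with the finite invariant $\T_\ell$-module-isomorphism-type function — so Chebotarev gives
\[
\P(J_{p,q}[\ell^\infty] \simeq G) = \lim_{X\to\infty} \frac{\#\{q<X : g_q \in S_G\}}{\#\{q<X\}} = \mu(S_G),
\]
which is exactly the claimed formula. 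The only remaining bookkeeping is to confirm that the density-zero exceptional primes and the choice of Frobenius lift (well-defined up to conjugacy and up to the kernel of $\rho_\ell$) do not affect either side.
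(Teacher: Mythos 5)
Your proposal follows the paper's argument essentially exactly: Eichler--Shimura identifies $J_{p,q}[\ell^\infty]$ with $\T_\ell/\det(\rho_\ell(\Frob_q)-1)\T_\ell$, then one needs surjectivity of $\rho_\ell$ onto $\G_\ell$, then Chebotarev. The one step you explicitly flag as the main obstacle --- determining exactly which $\ell$ to exclude so that the image is literally all of $\G_\ell$ --- is supplied in the paper by a theorem of Ribet specific to prime level, which gives surjectivity onto $\G_\ell$ whenever $\ell\nmid 6n\Disc(\T)$; that citation is the missing ingredient in your sketch. Your closing equidistribution argument is also correct (the set $S_G$ is conjugation-invariant and, away from the measure-zero locus $\det(g-1)=0$, the map $g\mapsto v(\det(g-1))$ is locally constant, so $S_G$ has measure-zero boundary); the paper reaches the same conclusion by cutting down to the finite quotient $\G_\ell(\k+\mathbf{1})$ and applying classical Chebotarev to the resulting finite Galois extension, which sidesteps the need to argue about boundaries of level sets in the profinite group.
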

Before discussing the proof of Theorem \ref{thm:mainabstract}, we address the natural follow-up  questions:
what is the distribution in Theorem \ref{thm:mainabstract}, in concrete terms?  The first thing to observe is that the distribution behaves  differently depending on the $\ell$-adic valuation of $q-1$. Indeed, the cokernel $\T_\ell/\det(g-1)\T_\ell$ measures the $\ell$-adic distance from 1 to the two eigenvalues of $g$. If $\det g$ (which we imagine is $q$) is itself $\ell$-adically close to $1$, then both of the eigenvalues can be close to 1, whereas if $\det g$ is not close to 1, then at most one eigenvalue can be close to $1$.

With this phenomenon in mind, we only give an explicit formula for the distribution as $q$ varies through primes $q \not\equiv 1\pmod{\ell}$; the case of $q \equiv 1\pmod\ell$ is more complicated.  We may write $\T_\ell \simeq \bigoplus_{i = 1}^t \O_i$, where each $\O_i$ is a finite free ring extension of $\Z_\ell$ of degree $d_i$.  Since $\ell$ does not divide the discriminant of $\T$,  each $\O_i$ is a discrete valuation ring with maximal ideal $\ell\O_i$ and residue field $\O_i/\ell\O_i$ of size $\ell^{d_i}$.  The following result computes the probability $\P(J_{p,q}[\ell^\infty] \simeq G)$ in Theorem \ref{thm:mainabstract}, in terms of the integers $d_1, \ldots, d_t$. Since $M_\ell$ is a rank one $\T_\ell$-module, this probability is 0 unless $G$ is isomorphic to  $G_\k := \bigoplus_{i = 1}^t \O_i/\ell^{k_i}\O_i$, for some tuple $\k = (k_i) \in \Z^t_{\geq0}$. 

\begin{theorem}\label{thm:mainexplicit}
Assume $\ell$ does not divide $6n\Disc(\T)$. Let $\delta_{ij}$ be Kronecker's delta function.  
As $q \to \infty$ varying through primes $q \not \equiv1\pmod\ell$, we have
\[\P(J_{p,q}[\ell^\infty] \simeq G_\k) = \frac{1}{\#G_\k}\prod_{i = 1}^t\left(1 - \frac{1}{\ell^{d_i} - 1}\right)^{\delta_{0k_i}}.\]
\end{theorem}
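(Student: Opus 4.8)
The plan is to deduce Theorem~\ref{thm:mainexplicit} from Theorem~\ref{thm:mainabstract} by computing the Haar measure
\[\mu\bigl(g \in \G_\ell \colon \T_\ell/\det(g-1)\T_\ell \simeq G_\k\bigr)\]
explicitly under the restriction that $\det g \not\equiv 1 \pmod{\ell}$ (which is the manifestation of the hypothesis $q \not\equiv 1 \pmod \ell$ once one checks, using the fact that for $q \nmid n\Disc(\T)$ the Frobenius data distributes equidistributionally, that the condition $q \not\equiv 1 \pmod \ell$ corresponds to $\det g \in \Z_\ell^\times \setminus (1 + \ell\Z_\ell)$). Because $\T_\ell \simeq \bigoplus_{i=1}^t \O_i$ with each $\O_i$ a DVR unramified over $\Z_\ell$, the group $\G_\ell$ decomposes compatibly: writing $g = (g_i)$ with $g_i \in \GL_2(\O_i)$, the constraint is the single global condition $\det g_1 = \cdots = \det g_t \in \Z_\ell^\times$, and $\T_\ell/\det(g-1)\T_\ell \simeq \bigoplus_i \O_i/\det(g_i - 1)\O_i$, so the event $\{\cdot \simeq G_\k\}$ factors as $\prod_i \{v_i(\det(g_i-1)) = k_i\}$ where $v_i$ is the valuation on $\O_i$.

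The first main step is therefore a local computation: for a DVR $\O = \O_i$ with residue field of size $\ell^{d}$ and a fixed unit $u \in \Z_\ell^\times$, compute, with respect to Haar measure on $\{g \in \GL_2(\O) \colon \det g = u\}$, the probability that $v(\det(g - 1)) = k$. Now $\det(g-1) = \det g - \Tr g + 1 = u + 1 - \Tr g$. When $u \not\equiv 1 \pmod \ell$, we have $u + 1 \not\equiv 0,2$... more to the point, I would argue as follows: reduce $g$ mod $\ell$ to get $\bar g \in \GL_2(\F_{\ell^d})$ with $\det \bar g = \bar u \ne 1$. Then $\det(\bar g - 1) = \bar u + 1 - \Tr \bar g$, and since $\bar u \ne 1$, the map $\bar g \mapsto \Tr \bar g$ on the fiber $\{\det = \bar u\}$ is "balanced" away from one bad value — one shows $\bar g - 1$ is invertible (i.e.\ $1$ is not an eigenvalue of $\bar g$) for exactly the expected proportion $1 - \frac{1}{\ell^{d}-1}$ of such $\bar g$, because requiring $1$ to be an eigenvalue of a matrix with fixed determinant $\bar u \ne 1$ forces the other eigenvalue to be $\bar u$, cutting out a subvariety whose point count over $\F_{\ell^d}$ gives exactly the factor $\frac{1}{\ell^d - 1}$ relative to $\#\{\det = \bar u\} = \#\SL_2(\F_{\ell^d})$. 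This yields the case $k = 0$: $\Pob(v(\det(g-1)) = 0) = 1 - \frac{1}{\ell^d-1}$, matching the $\delta_{0k_i}$ exponent. For $k \ge 1$ one conditions on $\bar g$ having $1$ as an eigenvalue and then computes the distribution of $v(\det(g-1))$ on that locus; by a change of variables moving the eigenvector to a standard position and using that Haar measure pushes forward nicely, $\det(g-1)$ becomes (up to a unit) a coordinate ranging uniformly over $\ell \O$, giving $\Pob(v = k) = \frac{1}{\ell^d-1}\cdot\frac{\ell^d - 1}{\ell^d}\cdot\frac{1}{\ell^{d(k-1)}} = \frac{1}{\ell^{dk}}(1 - \ell^{-d})\cdot$... and one checks the bookkeeping so that $\sum_{k \ge 0}(\text{prob}) = 1$ and the $k\ge 1$ terms assemble into $\frac{1}{\#G_\k}$ with no $\delta$-correction.

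The second step is to multiply the local answers: $\prod_i \Pob(v_i(\det(g_i-1)) = k_i)$, separating the factors with $k_i = 0$ (each contributing $1 - \frac{1}{\ell^{d_i}-1}$, hence the product $\prod_i (1 - \frac{1}{\ell^{d_i}-1})^{\delta_{0k_i}}$) from those with $k_i \ge 1$ (whose product, together with the $k_i = 0$ factors' remaining $\ell^{-d_i k_i} = 1$ contributions, is exactly $\frac{1}{\prod_i \ell^{d_i k_i}} = \frac{1}{\#G_\k}$), provided one has arranged the normalizations in the previous step so that the $k=0$ probability is $(1-\frac{1}{\ell^d-1})$ and the $k\ge 1$ probability is $\frac{1}{\ell^{dk}}$ exactly (note $\frac{1}{\#G_\k} = \prod_i \ell^{-d_i k_i}$ and $\ell^{-d_i\cdot 0} = 1$, so the formula is consistent). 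I should double-check that the single global determinant constraint couples the factors only through $u$, which is fine since the conditional distribution of each $g_i$ given $\det g_i = u$ is independent across $i$ under Haar measure on $\G_\ell$.

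The main obstacle I expect is the bijective/measure-theoretic change of variables in the $k \ge 1$ case of the local computation — making precise the claim that, after restricting to the (measure-zero-in-$\GL_2$-but-positive-in-the-fixed-determinant-fiber... actually positive measure) locus where $\bar g$ has eigenvalue $1$, the quantity $\det(g-1)$ is equidistributed like $\ell \cdot (\text{unit})$ times a uniform element of $\O$, i.e.\ carefully tracking how Haar measure on $\{g \in \GL_2(\O): \det g = u\}$ decomposes along the "distance of an eigenvalue from $1$" fibration. A clean way to handle this is to conjugate $g$ into upper-triangular form over the (possibly ramified, but here unramified) extension splitting its characteristic polynomial, or better, to use the Bruhat-type parametrization of $\GL_2(\O)$ and reduce the whole count to a sum of elementary $\ell$-adic volume computations; alternatively one can invoke a general principle about cokernels of $g - 1$ for Haar-random $g$ in $\GL_2$ of a local ring, conditioned on $\det g$, which may already be in the literature (e.g.\ in the style of the random-matrix cokernel results of Friedman–Washington and their descendants). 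Either route reduces the statement to bookkeeping, but the indexing of cases (eigenvalue $1$ with multiplicity one vs.\ a Jordan block, which cannot occur here since $\det g = u \ne 1$ rules out a repeated eigenvalue equal to $1$) needs to be stated carefully.
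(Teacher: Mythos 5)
Your proposal follows the same overall architecture as the paper: pass through Theorem~\ref{thm:mainabstract}, decompose $\T_\ell \cong \bigoplus_{i=1}^t \O_i$, condition on the common determinant $u = \det g$ (and you are right that, conditional on $u$, the components $g_i \in \GL_2(\O_i)$ are independent under Haar measure on $\G_\ell$ --- this is exactly how the paper handles the coupling, by summing over $D \in \Z_\ell^{\diamondsuit}(\k+1)$), and reduce to a per-factor local computation. Where you genuinely diverge is in that local computation. The paper (Propositions~\ref{prop:1eig} and~\ref{prop:1eigpart2}) counts matrices over $\O/\m^{k+1}$ with prescribed determinant $D$ and prescribed valuation of $\det(g-1)$ by a bare-hands triple sum over the matrix entries. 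You instead want to exploit that $\det(g-1) = 1 + \det g - \Tr g$ is affine in $\Tr g$, so the whole question is about the distribution of the trace on the fiber $\{\det g = u\}$. That is a shorter and more conceptual route, and it does give the correct answer, but the central claim --- that conditional on $\bar g - 1$ being singular, $\det(g-1)$ is uniformly distributed on $\ell\O$, i.e.\ that ``Haar measure pushes forward nicely'' --- is asserted rather than proved, and this is exactly where the substance of the paper's explicit count lives.

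To close that gap: since $\bar u \neq 1$, the condition that $\bar g$ has eigenvalue $1$ forces $\Tr g \equiv 1 + u \pmod{\ell}$, and on this entire congruence class the reduced characteristic polynomial $x^2 - \bar T x + \bar u = (x-1)(x-\bar u)$ has distinct roots. By Hensel's lemma the characteristic polynomial of $g$ then has distinct roots in $\O$, $g$ is diagonalizable over $\O$, and its centralizer is isomorphic to $\O^\times \times \O^\times$ regardless of the particular value of $\Tr g$. Hence all these conjugacy classes have the same size and $\Tr g$ is uniformly distributed on $1 + u + \ell\O$; equivalently $\det(g-1) = 1 + u - \Tr g$ is uniform on $\ell\O$. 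Combined with $\Pob(\bar g \text{ has eigenvalue } 1) = \frac{1}{\lambda - 1}$ (your conjugacy-class count in $\GL_2(\F_\lambda)$ is correct), this gives $\Pob(v(\det(g-1)) = k) = \frac{1}{\lambda - 1}\cdot \lambda^{-(k-1)}(1 - \lambda^{-1}) = \lambda^{-k}$ for $k \geq 1$, matching the paper. Your displayed identity contains a slip --- $\frac{1}{\ell^d - 1}\cdot\frac{\ell^d-1}{\ell^d}\cdot \ell^{-d(k-1)}$ equals $\ell^{-dk}$ exactly, not $\ell^{-dk}(1 - \ell^{-d})$ --- but the final bookkeeping you describe recovers the right formula. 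Your observation that a Jordan block with eigenvalue $1$ is excluded (since it would force $\det g = 1$) is also correct and worth keeping in a written-up version.
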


Note that $G_\k \simeq \bigoplus_{i = 1}^t (\Z/\ell^{k_i}\Z)^{d_i}$, as abstract abelian groups. Thus, one can easily determine from Theorem \ref{thm:mainexplicit} the explicit distribution of the abstract abelian groups $J_{p,q}[\ell^\infty]$.  To compute $\T_\ell$ (and hence the integers $d_i$), one can use sage or Magma.  See Section 6 for some worked out examples.

It is interesting to compare the distribution in Theorem \ref{thm:mainexplicit} with a naive Cohen-Lenstra-type heuristic. As explained in \cite{CLP},  because the Jacobian of a graph $X$ is naturally endowed with a perfect symmetric bilinear form, the Cohen-Lenstra heuristic in the setting of sandpile groups predicts that for each finite abelian group $G$, 
\[\P(J(X) \simeq G) \propto \frac{\#\{\mbox{perfect symmetric bilinear }  G \times G \to \C^\times\}}{\#G\#\Aut(G)}.\]
This is what Wood proves in the context of Erdos-Renyi random graphs.  In our case, $J_{p,q}$ is a rank one $\T$-module, and the number of perfect $\T$-linear pairings of a rank one $\T$-module is equal to the size of its automorphism group. Thus the naive Cohen-Lenstra heuristic would predict
\[\P(J_{p,q} \simeq G_\k) \propto \frac{1}{\#G_\k},\]
 which also agrees with the naive guess that $J(X_{p,q})[\ell^\infty]$ is modeled by the groups $\T_\ell/a\T_\ell$ for $a \in \T_\ell$.  This is nearly what we find in Theorem \ref{thm:mainexplicit}, except there is an extra factor of $1 - \frac{1}{\ell^{d_i}-1}$ for each trivial component of $G_\k$ (i.e.\ for each $i$ such that $k_i = 0$).  We see that for Jacobians of supersingular isogeny graphs, there is a slight and unexpected bias towards $\T$-modules with non-trivial components. 

For any fixed $t \geq 0$, one can give explicit formulas for the distribution of the groups $J_{p,q}$ as $q \to \infty$ varies through primes such that $v_\ell(q-1) = k$, but these formulas seem quite complicated for $k > 0$.  This is somewhat analogous to the complications that arise in the Cohen-Lenstra-Martinet heuristics when the ground field contains $\ell$-th roots of unity \cite{Garton}.  

One statistical quantity which we found accessible without any restriction on $q$, is the probability that $J_{p,q}[\ell^\infty]$ is cyclic.
\begin{theorem}\label{thm:introcylicsylow}
Let $\ell$ be a prime not dividing $6n\Disc(\T)$.  Recall $\T_\ell \simeq \bigoplus_{i= 1}^t \O_i$, with each $\O_i$ unramified over $\Z_\ell$ of degree $d_i$. Let $t_1$ be the number of factors with $d_i = 1$. Then the probability that $J_{p,q}[\ell^\infty]$ is a cyclic abelian group is
\[\frac{\ell-2 + t_1}{\ell-1}\prod_{i = 1}^{t}\left(1 - \frac{1}{\ell^{d_i}-1}\right) + \left(\frac{\ell^2 + (t_1 - 1)\ell - 1}{\ell^3 - 2\ell^2 + 1}\right)\prod_{i = 1}^{t}\left(1 - \frac{\ell^{d_i}}{\ell^{2d_i} -1}\right).\]
As a function of $\ell$, this expression is $1 - O(\ell^{-2})$, with implied constant depending only on $p$.
\end{theorem}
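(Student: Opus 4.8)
The plan is to deduce Theorem~\ref{thm:introcylicsylow} from Theorem~\ref{thm:mainabstract} by evaluating the Haar measure of the set of $g\in\G_\ell$ for which $\T_\ell/\det(g-1)\T_\ell$ is a cyclic abelian group, and then carrying out a count of matrices over finite fields. First I would translate cyclicity into a condition on $g$. Writing $g=(g_i)_{i=1}^t$ under $\GL_2(\T_\ell)=\prod_{i=1}^t\GL_2(\O_i)$ and using that $\O_i$ is a DVR with uniformizer $\ell$, one has $\O_i/\det(g_i-1)\O_i\cong\O_i/\ell^{n_i}\O_i\cong(\Z/\ell^{n_i}\Z)^{d_i}$ where $n_i:=v_\ell(\det(g_i-1))$, so $\T_\ell/\det(g-1)\T_\ell\cong\bigoplus_i(\Z/\ell^{n_i}\Z)^{d_i}$ is cyclic exactly when $\sum_{i\colon n_i\ge1}d_i\le1$ --- that is, when $n_i=0$ for every $i$ with $d_i\ge2$ and at most one index $i$ with $d_i=1$ has $n_i\ge1$. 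Since each event $\{n_i\ge1\}$ is the preimage of the clopen set $\ell\O_i$ under the continuous map $g\mapsto\det(g_i-1)$, the cyclicity locus is clopen in $\G_\ell$. Summing Theorem~\ref{thm:mainabstract} over the countably many cyclic isomorphism types (the interchange of limit and sum being routine, since the $q$-averages and the Haar measures are probability measures on the same countable set of module types) gives
\[\P\bigl(J_{p,q}[\ell^\infty]\text{ cyclic}\bigr)=\mu\bigl(g\in\G_\ell\colon\T_\ell/\det(g-1)\T_\ell\text{ cyclic}\bigr).\]

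Next I would disintegrate $\mu$ along the determinant. The homomorphism $\det\colon\G_\ell\to\Z_\ell^\times$ is surjective, so it pushes $\mu$ forward to Haar measure on $\Z_\ell^\times$, and its fibre over $u$ is $\prod_i\{g_i\in\GL_2(\O_i)\colon\det g_i=u\}$ equipped with the product of the Haar measures on these $\SL_2(\O_i)$-cosets. Thus sampling $g\sim\mu$ amounts to sampling $u\in\Z_\ell^\times$ uniformly and then, independently over $i$, sampling $g_i$ uniformly from the determinant-$u$ coset in $\GL_2(\O_i)$; reducing mod $\ell$ makes $\bar g_i$ uniform over $\{h\in\GL_2(\F_{\ell^{d_i}})\colon\det h=\bar u\}$. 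Now $n_i\ge1$ means $1$ is an eigenvalue of $\bar g_i$, which, together with $\det\bar g_i=\bar u$, forces the characteristic polynomial of $\bar g_i$ to be $x^2-(1+\bar u)x+\bar u=(x-1)(x-\bar u)$, of discriminant $(\bar u-1)^2$. The key computation is the standard count of matrices in $M_2(\F_{\ell^{d_i}})$ with this characteristic polynomial: if $\bar u\ne1$ the roots are distinct and split, giving one regular semisimple conjugacy class of size $\ell^{d_i}(\ell^{d_i}+1)$; if $\bar u=1$ the polynomial is $(x-1)^2$, giving $\ell^{2d_i}$ matrices (the identity together with the regular unipotent class). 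Dividing by $\#\SL_2(\F_{\ell^{d_i}})=\ell^{d_i}(\ell^{2d_i}-1)$ yields
\[\rho_i(u):=\P(n_i\ge1\mid u)=\begin{cases}\tfrac{1}{\ell^{d_i}-1},& \bar u\ne1,\\ \tfrac{\ell^{d_i}}{\ell^{2d_i}-1},& \bar u=1,\end{cases}\]
so $\rho_i(u)$ depends on $u$ only through whether $\bar u=1$.

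To assemble the answer, let $S=\{i\colon d_i=1\}$, so $\#S=t_1$, and let $S'$ be its complement. By independence, for fixed $u$,
\[\P(\text{cyclic}\mid u)=\Bigl(\prod_{i\in S'}(1-\rho_i)\Bigr)\Bigl(\prod_{i\in S}(1-\rho_i)\Bigr)\Bigl(1+\sum_{j\in S}\frac{\rho_j}{1-\rho_j}\Bigr),\]
the three factors recording respectively: no nontrivial contribution from $S'$; and either no nontrivial contribution from $S$, or exactly one, summed over which index of $S$ it is. Substituting the two values of $\rho_i$ --- note $\rho_j/(1-\rho_j)$ equals $\tfrac{1}{\ell-2}$ when $\bar u\ne1$ and $\tfrac{\ell}{\ell^2-\ell-1}$ when $\bar u=1$ --- and then integrating over $u$ with the weights $\P(\bar u\ne1)=\tfrac{\ell-2}{\ell-1}$ and $\P(\bar u=1)=\tfrac{1}{\ell-1}$ produces precisely the two displayed terms, after the identities $\prod_{i\in S}(1-\tfrac{1}{\ell-1})\prod_{i\in S'}(1-\tfrac{1}{\ell^{d_i}-1})=\prod_{i=1}^t(1-\tfrac{1}{\ell^{d_i}-1})$ and $(\ell-1)(\ell^2-\ell-1)=\ell^3-2\ell^2+1$. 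The final assertion then follows by Taylor-expanding each factor in $1/\ell$: the first term equals $\bigl(1+\tfrac{t_1-1}{\ell-1}\bigr)\bigl(1-\tfrac{t_1}{\ell-1}\bigr)+O(\ell^{-2})=1-\tfrac{1}{\ell-1}+O(\ell^{-2})$ and the second equals $\tfrac{1}{\ell}+O(\ell^{-2})$, and since $\tfrac{1}{\ell}-\tfrac{1}{\ell-1}=O(\ell^{-2})$ the sum is $1+O(\ell^{-2})$, the implied constant being a function of $t$, $t_1$ and the $d_i$ alone, hence of $p$ alone.

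I expect the main obstacle to be a matter of care rather than of depth: making the passage from Theorem~\ref{thm:mainabstract} (an assertion about individual isomorphism types) to the Haar-measure statement about the clopen cyclicity locus fully rigorous, and getting the disintegration over $\det g$ exactly right --- especially the point that the reduction mod $\ell$ of a uniformly random element of a fixed determinant coset of $\GL_2(\O_i)$ is uniform in the corresponding coset of $\SL_2$ over the residue field, which is what licenses replacing the $\ell$-adic problem by the finite-field count. The conjugacy-class enumeration and the closing algebraic simplification are routine.
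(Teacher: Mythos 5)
Your proof is correct, and it reaches the formula by a genuinely more direct route than the paper. The paper proves Theorem~\ref{thm:introcylicsylow} (as Corollary~\ref{cor:cyclicpropfixedl}) by first establishing the full $\T_\ell$-module distribution of $J_{p,q}[\ell^\infty]$ for $q\not\equiv1\pmod\ell$ via the $\ell$-adic matrix counts of Propositions~\ref{prop:1eig}--\ref{prop:eigpart3}, reading off the cyclic cases from Theorem~\ref{thm:distrib}, separately computing a mod-$\ell$ conjugacy-class count for $q\equiv1\pmod\ell$, and then combining the two residue classes via Dirichlet's theorem. You observe instead that cyclicity is detected entirely by $g$ modulo $\ell$ --- each summand $\O_i/\ell^{n_i}$ is nontrivial precisely when $1$ is an eigenvalue of $\bar g_i$ --- so the whole question collapses to a single count of matrices in $\GL_2(\F_{\lambda_i})$ with $1$ as an eigenvalue and prescribed determinant $\bar u$, distinguishing $\bar u=1$ from $\bar u\neq1$. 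This bypasses the heavier $\ell$-adic work of Propositions~\ref{prop:1eig}--\ref{prop:eigpart3} (which the paper of course needs anyway for Theorem~\ref{thm:mainexplicit}), and your determinant disintegration of Haar measure is the measure-theoretic twin of the paper's Dirichlet's-theorem step; beyond that bookkeeping, both proofs rest on the same two pillars of conditional independence across the $\O_i$ given $\det g$ and a $\GL_2(\F_\lambda)$ class count. One small caution: the aside that ``the interchange of limit and sum is routine'' when summing Theorem~\ref{thm:mainabstract} over cyclic isomorphism types is not by itself an argument, but it is also unnecessary --- since the cyclicity locus is the preimage of a subset of $\GL_2(\T_\ell/\ell\T_\ell)$, you can apply Cebotarev once directly to that finite quotient, exactly as in the proof of Theorem~\ref{thm:mainabstract}, rather than summing its conclusion over countably many module types.
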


We can prove similar results for the subgroup $J_{p,q}[L^\infty]$, where $L$ is any squarefree integer coprime to $6n\Disc(\T)$.  We find that the $\ell$-Sylow subgroups are independent of each other, as one might expect (see Theorem \ref{thm:cyclicprop}). Taking $L \to \infty$, we deduce the following upper bound on the proportion of primes $p$ for which $J_{p,q}$ is cyclic. 

\begin{theorem}\label{thm:introcyclic}
Let $S = \prod_{\ell \mid 6n\Disc(\T)}\ell$ and let $J_{p,q}[1/S]$ be the prime-to-$S$ part of $J_{p,q}$. Then as $q \to \infty$, the probability that $J_{p,q}[1/S]$ is cyclic is at most
\[\displaystyle \prod_{\ell \notin S} \left(\frac{\ell-2 + t_1(\ell)}{\ell-1}\prod_{i = 1}^{t(\ell)}\left(1 - \frac{1}{\ell^{d_i}-1}\right) + \left(\frac{\ell^2 + (t_1(\ell) - 1)\ell - 1}{\ell^3 - 2\ell^2 + 1}\right)\prod_{i = 1}^{t(\ell)}\left(1 - \frac{\ell^{d_i}}{\ell^{2d_i} -1}\right)\right) > 0,\]
where $t(\ell)$ is the number of factors in $\T_\ell$ and $t_1(\ell)$ is the number of degree one factors of $\T_\ell$. In particular, this is also an upper bound for the probability that $J_{p,q}$ is cyclic.
\end{theorem}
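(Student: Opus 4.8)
The plan is to deduce this from Theorem \ref{thm:introcylicsylow} together with an independence statement for the various $\ell$-Sylow subgroups, and then to take a limit over larger and larger squarefree moduli $L$. First I would establish the independence: for a squarefree integer $L = \ell_1 \cdots \ell_r$ coprime to $6n\Disc(\T)$, I expect that as $q \to \infty$, the joint distribution of the tuple $\bigl(J_{p,q}[\ell_1^\infty], \ldots, J_{p,q}[\ell_r^\infty]\bigr)$ converges to the product of the individual limiting distributions. This should follow from the same Galois-representation input underlying Theorem \ref{thm:mainabstract}: the image of Frobenius at $q$ in $\GL_2(\T \otimes \Z/L\Z) \cong \prod_i \GL_2(\T \otimes \Z/\ell_i\Z)$ equidistributes (by Chebotarev, using that the mod-$\ell_i$ representations are "independent" because $\ell_i \nmid \Disc(\T)$ and the relevant open subgroups of $\GL_2(\T_{\ell_i})$ are pro-$\ell_i$ up to a finite part), so the cokernels $\T_{\ell_i}/\det(g_i - 1)$ become independent in the limit. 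This is presumably exactly the content of the promised Theorem \ref{thm:cyclicprop}, which I would invoke.

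Granting independence, for each fixed $L$ the probability that $J_{p,q}[L^\infty]$ is cyclic equals the finite product $\prod_{\ell \mid L,\ \ell \notin S} c(\ell)$, where $c(\ell)$ is the explicit expression from Theorem \ref{thm:introcylicsylow}. Since $J_{p,q}[1/S]$ is cyclic if and only if $J_{p,q}[L^\infty]$ is cyclic for every squarefree $L$ coprime to $S$, the event "$J_{p,q}[1/S]$ cyclic" is contained in the event "$J_{p,q}[L^\infty]$ cyclic" for each such $L$. Hence $\limsup_{q} \P(J_{p,q}[1/S]\text{ cyclic}) \le \prod_{\ell \mid L,\ \ell \notin S} c(\ell)$ for every $L$, and letting $L$ run through the product of the first $k$ primes not in $S$ and sending $k \to \infty$ gives the stated infinite product as an upper bound. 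The infinite product converges to a positive number because, by the last sentence of Theorem \ref{thm:introcylicsylow}, $c(\ell) = 1 - O(\ell^{-2})$, so $\sum_\ell (1 - c(\ell)) < \infty$; this also shows the bound is strictly less than $1$ and strictly positive. Finally, since $J_{p,q}$ cyclic implies $J_{p,q}[1/S]$ cyclic, the same quantity bounds $\P(J_{p,q}\text{ cyclic})$.

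The one genuine subtlety — and the step I expect to require the most care — is the interchange of limits: we are asserting $\limsup_q \P(\cdots) \le \lim_L \prod_{\ell \mid L} c(\ell)$, which is fine as an inequality for each $L$ but does \emph{not} by itself yield equality, and indeed the theorem only claims an upper bound. One must be careful that the convergence in Theorem \ref{thm:mainabstract}/\ref{thm:cyclicprop} is genuinely uniform enough to pass to the limit in $L$ for the \emph{lower} bound; since we only want the upper bound, it suffices to note that cyclicity of the prime-to-$S$ part is a decreasing limit of the finitely-many-primes events, so no uniformity in $L$ is needed at all — the monotone bound $\P(A) \le \P(A_L)$ holds pointwise in $q$ and survives $\limsup_q$. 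I would also double-check that $J_{p,q}[1/S]$ makes sense and that \eqref{eq:sandpile} applies to every $\ell \notin S$ (it does, since $S$ contains all primes dividing $6n\Disc(\T)$, in particular all $\ell \mid n$), so that Theorem \ref{thm:introcylicsylow} is applicable at each such $\ell$.
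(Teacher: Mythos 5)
Your proposal is correct and follows essentially the same route as the paper: both arguments rest on Ribet's theorem giving surjectivity onto the product $\prod_i \G_{\ell_i}$ for any finite set of primes outside $S$ (hence independence of the $\ell_i$-Sylow distributions), combined with the monotonicity observation that cyclicity of $J_{p,q}[1/S]$ implies cyclicity of each finite-level piece, so the bound passes to the limit over $L$ without any uniformity. Your remarks about why only an upper bound is obtained, and about the convergence of the Euler product via $c(\ell) = 1 - O(\ell^{-2})$, are a faithful and somewhat more detailed account of what the paper's very terse proof leaves implicit.
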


We conjecture that this upper bound is also the correct lower bound, which would imply that a positive proportion of the groups $J_{p,q}[1/S]$ are cyclic.

\subsection{Proofs}
The first step in the proof of Theorem \ref{thm:mainabstract} is to identify a rank two $\T_\ell$-module $V_\ell$, \emph{which does not depend on $q$}, and operators $g_q \in \End_{\T_\ell}(V_\ell)$ such that $J_{p,q}[\ell^\infty] \simeq \T_\ell/\det(g_q -1)\T_\ell$.  

To find $V_\ell$, we use the deep connection between supersingular isogeny graphs and Galois representations that are familiar in number theory. Indeed, the algebra $\T$ is also known to act by correspondences on the modular curve $X_0(p)$; see e.g. \cite{Emerton02}. Moreover, there is a $(\T \otimes \Q)$-linear isomorphism between $M \otimes \Q$ and the space $S_2(\Gamma_0(p), \Q)$ of weight two cusp forms of level $\Gamma_0(p)$, which may be viewed as the space of regular differentials on $X_0(p)$.  This is proved either by using the Jacquet-Langlands correspondence or via theta series as in \cite{Emerton02}.  Let $J_0(p)$ be the Jacobian of $X_0(p)$, an abelian variety of dimension $n - 1$.  Then the Tate module \[V_\ell = \lim_{\leftarrow} J_0(p)[\ell^k]\] is free of rank two over $\T_\ell$ and admits an action $\rho_\ell \colon G_\Q \to \Aut_{\T_\ell}(V_\ell)$ by the absolute Galois group $G_\Q = \Gal(\bar \Q/\Q)$. In conjunction with (\ref{eq:sandpile}) and the $\T$-module isomorphism $M \otimes \Q \simeq S_2(\Gamma_0(p),\Q)$, the Eichler-Shimura relation \cite{ShimuraEichler} implies that for $\ell \nmid pN$, we have
\begin{equation}\label{eq:jacobian}
    J_{p,q}[\ell^\infty] \simeq \T_\ell/\det(\rho_\ell(\mathrm{Frob}_q) - 1)\T_\ell,
\end{equation}
where $\mathrm{Frob}_q$ is a choice of Frobenius automorphism at $p$ (see Proposition \ref{prop:eichlershimura}). Thus $V_\ell$ is the sought after $\T_\ell$-module.  The elements $\rho_\ell(\mathrm{Frob}_q)$ lie in $\G_\ell$ because the determinant of $\rho_\ell$ is the cyclotomic character, and hence $\det(\rho_\ell(\mathrm{Frob}_q)) = q$.  As an aside, it was observed in \cite[\S 4.3]{Li}  that for $\ell \nmid n$, we have
\[\#J_{p,q}[\ell^\infty] = \#J_0(p)(\F_q)[\ell^\infty],\]
which is a weaker version of (\ref{eq:jacobian}).  In fact, one deduces from (\ref{eq:jacobian}) that the groups $J_{p,q}[\ell^\infty]$ and $J_0(p)(\F_p)[\ell^\infty]$ are {\it not} isomorphic in general, despite having the same cardinality.

To prove that the Frobenius cokernels $\mu$-equidistribute, we need two more  ingredients. The first is the fact that $\rho_\ell$ maps $G_\Q$ surjectively onto $\G_\ell$. General results for Galois representation attached to modular curves imply that this surjectivity holds for all but finitely many $\ell$.   To get precise control of the allowed values of $\ell$, we use Ribet's strong result in the case where the level $p$ is prime \cite{RibetPac}, which proves surjectivity for all $\ell \nmid n \Disc(\T)$.  The second ingredient in our proof is Cebotarev's density theorem. 

The proofs of Theorems 2-4 amount to some elaborate $\ell$-adic computations. There may be a slicker proof of Theorem \ref{thm:mainexplicit} using more group theory, but we prefer our elementary proof, which may be helpful for readers interested in computing the full distribution, including primes $q \equiv 1\pmod\ell$.

\subsection{Remarks and further directions}
For $\ell \mid n\Disc(\T)$, the distribution of the groups $J_{p,q}[\ell^\infty]$  may very well differ from those in Theorem  \ref{thm:mainabstract} and \ref{thm:mainexplicit}. For example, if there is a mod $\ell$ congruence between two newforms of level $p$, then $\ell$ will divide $\Disc(\T)$ and the distribution will not agree with Theorem \ref{thm:mainexplicit}. In that case, the distribution could conceivably agree with the abstract distribution in Theorem \ref{thm:mainabstract}, but Ribet's results no longer apply, so we are not sure.  

It is natural to wonder about the distribution of the groups $J_{p,q}[\ell^\infty]$ with $q$ fixed and $p \to \infty$.  The authors are not sure what the distribution should be in this case.  What is needed is an $\ell$-adic version of the results of Serre \cite{SerreHecke} and Conrey-Duke-Farmer \cite{ConreyDukeFarmer}, but   controlling the $\ell$-adic norms of the terms in the trace formula seems difficult. 

\subsection{Acknowledgments}
The authors thank Amitay Kamber, as well as Ori Parzanchevski for a helpful conversation about Hashimoto's edge-adjacency operator. The first author visited the Hebrew University of Jerusalem virtually as part of an internship via the Erasmus program and ENS Rennes. The second author was supported by the Israel Science Foundation (grant No. 2301/20).

\section{Jacobians of graphs}\label{sec:Jacobians}
Let $X$ be a finite undirected graph with vertex set $V(X)$ and edge set $E(X)$. Self-loops and multi-edges are allowed. The Jacobian of $X$ is defined in terms of the group of divisors on $G$ 
\[\Div(X) = \left\{\sum_{v \in V(X)} a_v v \colon a_v \in \Z\right\},\] 
which is the free abelian group on the set $V(X)$. We may of course identify $\Div(X)$ with the group $\Hom(V(X),\Z)$ from the introduction.  

The {\em degree} of a divisor $\sum a_v v$ is the integer $\sum a_v$. We write $\Div^0(X) \subset \Div(X)$ for the subgroup of degree 0 divisors. Each function $f \colon V(X) \to \Z$ determines a  {\em principal divisor} 
\[\divv(f) = \sum_v \sum_{e = vw \in E(X)} (f(v) - f(w)) v.\]  Two divisors $D, D' \in \Div(X)$ are {\it linearly equivalent} if their difference is principal, or, in other words, if $D - D' = \divv(f)$ for some function $f$. Principal divisors have degree 0, so equivalent divisors have the same degree. 
\begin{definition}{\em 
The \emph{Jacobian} $J(X)$ is the group of linear equivalence classes in $\Div^0(X)$, i.e.\ the quotient of $\Div^0(X)$ by the subgroup of principal divisors.  }
\end{definition}

\begin{remark}{\em
The group of all functions $f \colon V(X) \to \Z$ is generated by the indicator functions $\delta_v(w) = \delta_{vw}$, for each $v \in V(X)$. The linear equivalence relation is therefore generated by the relations $\sum_{e = vw} (v-w)$, one for each $v$.}  
\end{remark}

\section{Jacobians of supersingular isogeny graphs}
Recall that $p \equiv 1 \pmod{12}$ is a prime and $X_p$ is the set of supersingular elliptic curves over $\bar\F_p$. Let $q$ be any prime different from $p$, and let $X= X_{p,q}$ be the supersingular $q$-isogeny graph with vertex set $X_p$.  Then $G$ is a $(q+1)$-regular graph on $n:=(p-1)/12$ vertices.

The $\Z$-module $\Div(X)$ is free of rank $n$, with basis given by the $n$ supersingular elliptic curves $E \in X_p$. The Hecke operator $T_q \colon \Div(X) \to \Div(X)$ sends $E$ to $\sum_{E'\sim E} E'$, where the sum is over every edge $E' \sim E$ in $X_{p,q}$. 

\begin{example}\label{ex:p=37q=2}{\em 
When $p = 37$, there are three supersingular elliptic curves, with $j$-invariants  $8$ and the roots $\alpha$ and $\bar\alpha$ of $x^2 -6x -6 \in \F_{37}[x]$.  For $q = 2$, we compute in sage that
\begin{itemize}
\item $T_2E_8 = E_8 + E_\alpha + E_{\bar\alpha}$,
\item  $T_2E_\alpha = E_8 + 0E_\alpha + 2E_{\bar\alpha}$, and  
\item $T_2E_{\bar\alpha} = E_8 + 2E_\alpha + 0E_{\bar \alpha}$. 
\end{itemize}
The Hecke operator $T_2$ is degree $3$, since the graph $X_{37,2}$ is $3$-regular.   }
\end{example}
In general, $T_q$ sends divisors of degree $n$ to divisors of degree $(q+1)n$.  In particular, it preserves the subgroup $\Div^0(X)$. The latter sits in an exact sequence 
\begin{equation}\label{eq:exact}
    0 \to \Div^0(X) \longrightarrow \Div(X) \stackrel{\deg}{\longrightarrow} \Z \to 0,
\end{equation}
of abelian groups. For varying primes $q$, the operators $T_q \in \End_\Z\Div(X)$ commute with each other.  They therefore generate a commutative algebra $\tilde\T$, called the Hecke algebra, which is itself free of rank $n$ as a $\Z$-module \cite{Emerton02}. Then (\ref{eq:exact}) is an exact sequence of $\tilde\T$-modules, with $T_q$ acting on $\Z$ by multiplication by $q+1$. Let $\T$ be the subalgebra of $\End_\Z\, \Div^0(X)$ generated by the action of the $T_q$. Then $\T$ is a quotient of $\tilde \T$ and has rank $n-1$.

\begin{lemma}
There is an isomorphism of $\T$-modules $J_{p,q} \simeq \Div^0(X)/(q + 1 - T_q)\Div(X)$.
\end{lemma}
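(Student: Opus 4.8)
The plan is to unwind the definitions of the Jacobian and the principal divisors to see that the subgroup of principal divisors in $\Div^0(X)$ is exactly the image of the Laplacian operator $L_q = q+1-T_q$. First I would recall from Section \ref{sec:Jacobians} that $J(X) = \Div^0(X)/P(X)$, where $P(X)$ is the subgroup generated by the principal divisors $\divv(\delta_E)$ for $E \in X_p$, since the $\delta_E$ generate all functions $V(X) \to \Z$. Now compute $\divv(\delta_E)$ directly from the formula $\divv(f) = \sum_v \sum_{e = vw} (f(v) - f(w))\, v$: for $f = \delta_E$, the term at vertex $E$ is $\sum_{e = EE'} (1 - \delta_{EE'})$, i.e.\ $(q+1)$ minus the number of self-loops at $E$, while the term at a vertex $E' \neq E$ is $-m(E,E')$, the number of edges joining $E'$ to $E$. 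Since $T_q$ sends $E'$ to $\sum_{E'' \sim E'} E''$ counted with multiplicity, the coefficient of $E$ in $T_q(E')$ is exactly $m(E,E')$ (including the self-loop count when $E' = E$). Hence $\divv(\delta_E) = (q+1)E - T_q(E) = L_q(E)$ as elements of $\Div^0(X)$. This uses the fact that $X_{p,q}$ is $(q+1)$-regular, so each row sums correctly.

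Given this, the subgroup of principal divisors $P(X)$ is precisely $L_q(\Div(X)) = (q+1-T_q)\Div(X)$, which lands in $\Div^0(X)$ since $L_q$ kills the degree (because $T_q$ acts as $q+1$ on the degree, by the displayed exact sequence \eqref{eq:exact}). Therefore
\[
J_{p,q} = J(X_{p,q}) = \Div^0(X)/P(X) = \Div^0(X)/(q+1-T_q)\Div(X),
\]
and this is an isomorphism of $\T$-modules because $L_q$, hence each $T_{q'}$, commutes with the action of $\T$ on both $\Div(X)$ and $\Div^0(X)$, so the quotient inherits a well-defined $\T$-action for which the quotient map is $\T$-linear.

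I do not expect any serious obstacle here; the only point requiring care is the bookkeeping of multiplicities when the graph $X_{p,q}$ has self-loops or multi-edges (which it does for small $q$, as in Example \ref{ex:p=37q=2}), so I would be explicit that $m(E,E')$ counts edges with multiplicity and that a self-loop at $E$ contributes $0$ to the coefficient of $E$ in $\divv(\delta_E)$, matching the diagonal entry of $T_q$. One could alternatively phrase the whole argument in terms of the reduced Laplacian matrix and the standard matrix-tree-type description of $J(X)$, but the direct divisor-theoretic computation above is the cleanest and makes the $\T$-linearity transparent.
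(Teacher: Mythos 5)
Your proof is correct and takes exactly the same route as the paper: the paper's proof is the one-line observation that by unwinding the definitions, the subgroup of principal divisors is $(q+1-T_q)\Div(X)$, and your computation of $\divv(\delta_E) = L_q(E)$ is precisely what fills in that observation, with the right care about self-loops contributing $0$ to the divisor of an indicator function.
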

\begin{proof}
From the definitions, the subgroup of principal divisors is precisely $(q+1 - T_q)\Div(X)$.
\end{proof}

The degree map $\Div(X) \to \Z$ does not quite admit a $\tilde\T$-module section, but there is a map $\Z \to \Div(X)$ sending $1$ to $\Delta := \sum_{E \in X_p} E$.  The subgroup $\Div^0(X) \oplus \Z \Delta$ has index $n$ inside $\Div(X)$, with cokernel isomorphic to $\Z/n\Z$.  
 
 \begin{proposition}\label{prop:groupstructure}
 We have $\#J_{p,q} =  \frac1n \det(q + 1 - T_q | \Div^0(X))$. More precisely, there is an isomorphism of $\T$-modules 
 \[J_{p,q} \simeq \mathrm{coker}(q + 1 - T_q | \Div^0(X))/\langle(q + 1 - T_q)(E)\rangle,\] 
 for any choice of $E \in X_p$.
 \end{proposition}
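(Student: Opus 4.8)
The plan is to deduce the Proposition from the preceding Lemma, which gives $J_{p,q}\simeq\Div^0(X)/L_q\Div(X)$ with $L_q:=q+1-T_q$, together with the exact sequence (\ref{eq:exact}), via the snake lemma. Write $\Delta=\sum_{E\in X_p}E\in\Div(X)$. I would first record two facts about $L_q$ acting on $\Div(X)$. (i) It maps $\Div(X)$ into $\Div^0(X)$: since $\deg(T_qD)=(q+1)\deg D$, we get $\deg(L_qD)=0$ for all $D$. (ii) Its kernel on $\Div(X)$ is exactly $\Z\Delta$: over $\Q$ the operator $L_q$ is the Laplacian $(q+1)\cdot\mathrm{id}-A_q$ of the $(q+1)$-regular graph $X_{p,q}$, which is connected (a classical fact about supersingular isogeny graphs), so its kernel is the line spanned by the all-ones vector $\Delta$; and $\Delta$ is primitive in $\Div(X)$, so the integral kernel is $\Z\Delta$. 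The same computation shows $L_q$ is positive definite on $\Div^0(X)\otimes\R=\Delta^{\perp}$, whence $d_q:=\det(L_q|\Div^0(X))$ is a positive integer and $\mathrm{coker}(L_q|\Div^0(X))$ is finite of order $d_q$.

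Next I would apply the snake lemma to the commutative diagram with exact rows having (\ref{eq:exact}) as top row and $0\to\Div^0(X)\xrightarrow{\mathrm{id}}\Div^0(X)\to 0$ as bottom row, with vertical maps $L_q$ on the first two columns and $0$ on the third; this commutes by (i). Since $\ker(L_q|\Div^0(X))=0$ and $\ker(L_q|\Div(X))=\Z\Delta$ by (ii), the snake sequence collapses to an exact sequence of $\T$-modules
\[0\longrightarrow\Z\Delta\xrightarrow{\deg}\Z\xrightarrow{\delta}\mathrm{coker}(L_q|\Div^0(X))\longrightarrow J_{p,q}\longrightarrow 0,\]
where $\T$-linearity is automatic because $L_q$, $\deg$ and $\Delta$ are fixed by $\T$. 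The left map is multiplication by $n=\deg\Delta$, so $\delta$ induces an injection $\Z/n\Z\hookrightarrow\mathrm{coker}(L_q|\Div^0(X))$; unwinding the connecting homomorphism, $\delta(1)$ is the class $\overline{L_qE}$ of $L_qE$ for any fixed vertex $E$ (lift $1\in\Z$ to $E\in\Div(X)$, then apply $L_q$). Hence $\overline{L_qE}$ has order exactly $n$, $J_{p,q}\simeq\mathrm{coker}(L_q|\Div^0(X))/\langle\overline{L_qE}\rangle$ as $\T$-modules, and $\#J_{p,q}=d_q/n=\tfrac1n\det(q+1-T_q|\Div^0(X))$, as claimed.

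One can also obtain the isomorphism without the snake lemma: for any vertex $E$ one has $\Div(X)=\Div^0(X)+\Z E$, hence $L_q\Div(X)=L_q\Div^0(X)+\Z\,L_qE$ with $L_qE\in\Div^0(X)$ by (i); the order of $\overline{L_qE}$ in $\mathrm{coker}(L_q|\Div^0(X))$ is then the least $m>0$ with $mL_qE=L_qD$ for some $D\in\Div^0(X)$, equivalently $mE-D\in\ker(L_q|\Div(X))=\Z\Delta$, which by comparing degrees forces $m=n$ (realized by $D=nE-\Delta$).

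The one genuinely non-formal input is the identification $\ker(L_q|\Div(X))=\Z\Delta$, resting on connectedness of the supersingular $q$-isogeny graph together with primitivity of $\Delta$; it is exactly this that pins the order of $\overline{L_qE}$ to be $n$ and not a proper divisor of $n$. Everything else is the snake lemma and the positive semidefiniteness of a graph Laplacian, so I anticipate no serious obstacle.
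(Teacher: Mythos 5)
Your proof is correct and uses essentially the same snake-lemma argument as the paper, though you take (\ref{eq:exact}) as the top row rather than the paper's diagram with top row $0\to\Div^0(X)\oplus\Z\Delta\to\Div(X)\to\Z/n\Z\to0$; both diagrams yield the same four-term exact sequence $0\to\Z/n\Z\to\mathrm{coker}(L_q\mid\Div^0(X))\to J_{p,q}\to0$. You also make explicit the input the paper leaves unstated, namely $\ker(L_q\mid\Div(X))=\Z\Delta$ (from connectedness of $X_{p,q}$ and primitivity of $\Delta$), which is what forces the connecting map $\Z/n\Z\to\mathrm{coker}(L_q\mid\Div^0(X))$ to be injective and hence gives the cardinality formula.
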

 \begin{proof}
This follows from applying the snake lemma to the diagram
\begin{center}
		\begin{tikzcd}
0 \arrow[r] & \Div^0(X) \oplus \Z\Delta \arrow[r] \arrow[d] & \Div(X) \arrow[d] \arrow[r] & \Z/n\Z \arrow[r]\arrow[d] & 0 \\
0 \arrow[r] & \Div^0(X)	 \arrow[r]& \Div^0(X) \arrow[r] & 0 \arrow[r] &0
		\end{tikzcd}\end{center}
whose vertical maps are $q+1 - T_q$.
 \end{proof}

Since $\T$ and $\Div^0(X)$ are free of rank $n-1$ as $\Z$-modules, the group $\Div^0(X)$ is a torsion-free rank one $\T$-module, though not necessarily free. For any prime $\ell$, let $\T_\ell = \T \otimes_\Z \Z_\ell$ and $\Div^0(X)_\ell = \Div^0(X) \otimes_\Z \Z_\ell$.  The $\Q_\ell$-algebra $\T_\ell \otimes \Q = \T \otimes_\Z \Q_\ell$ is a finite product $\bigoplus_{i=1}^t K_i$ of finite field extensions of $\Q_\ell$.  Assume from now on that $\ell$ does not divide the discriminant of $\T$. Then each $K_i$ is unramified of degree $d_i  \geq 1$ over $\Q_\ell$, and 
\[\T_\ell \simeq \bigoplus_{i = 1}^t \O_i,\] where $\O_i$ is the integral closure of $\Z_\ell$ in $K_i$.  

\begin{proposition}\label{prop:decomp1}
If $\ell$ does not divide $n \Disc(\T)$, then
\[J_{p,q}[\ell^\infty] \simeq \bigoplus_{i = 1}^t \O_i/(q+1 - T_q)\O_i,\]
where we view $T_q$ in $\O_i$ via the projection $\T_\ell \to \O_i$.
\end{proposition}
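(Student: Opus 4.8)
The plan is to reduce to a cokernel of the form $M_\ell/L_qM_\ell$, then split it along the idempotents of $\T_\ell$, using $\ell\nmid\Disc(\T)$ to recognize each factor as a discrete valuation ring. Write $M=\Div^0(X)$, $L_q=q+1-T_q\in\T$, and $M_\ell=M\otimes_\Z\Z_\ell$. By Proposition~\ref{prop:groupstructure}, $J_{p,q}\simeq(M/L_qM)/\langle\overline{L_qE}\rangle$, where $\overline{L_qE}$ is the class of $(q+1-T_q)(E)$; comparing cardinalities with $\#J_{p,q}=\tfrac1n\det(L_q\mid M)=\tfrac1n\#(M/L_qM)$ shows that the cyclic group $\langle\overline{L_qE}\rangle$ has order exactly $n$, hence is prime to $\ell$. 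Therefore $J_{p,q}[\ell^\infty]$ equals the $\ell$-Sylow subgroup of $M/L_qM$, which (as $M$ is $\Z$-free of finite rank) is $(M/L_qM)\otimes_\Z\Z_\ell\simeq M_\ell/L_qM_\ell$, an isomorphism of $\T_\ell$-modules (this is equation (\ref{eq:sandpile})).

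Next I would decompose using idempotents. Since $\ell\nmid\Disc(\T)$, one has $\T_\ell\simeq\bigoplus_{i=1}^t\O_i$ with each $\O_i$ the ring of integers of an unramified extension $K_i/\Q_\ell$, in particular a discrete valuation ring; let $e_i\in\T_\ell$ be the corresponding orthogonal idempotents. Then $M_\ell=\bigoplus_i e_iM_\ell$ with $e_iM_\ell$ an $\O_i$-module, and since $L_q\in\T_\ell$ commutes with every $e_i$ we get $L_qM_\ell=\bigoplus_i L_q(e_iM_\ell)$ and hence $M_\ell/L_qM_\ell\simeq\bigoplus_i e_iM_\ell/L_q(e_iM_\ell)$, where $L_q$ acts on the $i$-th summand through the projection $\T_\ell\to\O_i$. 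It remains to identify $e_iM_\ell$ with $\O_i$: since $M_\ell$ is a torsion-free rank one $\T_\ell$-module, $e_iM_\ell$ is a finitely generated $\O_i$-module that is torsion-free over $\O_i$ (an $\O_i$-torsion element would be $\Z_\ell$-torsion, impossible since $M_\ell\simeq\Z_\ell^{\,n-1}$) and of $K_i$-rank one, hence free of rank one over the discrete valuation ring $\O_i$. Fixing an $\O_i$-basis, $L_q$ acts on $e_iM_\ell\simeq\O_i$ as multiplication by the image of $q+1-T_q$, so $e_iM_\ell/L_q(e_iM_\ell)\simeq\O_i/(q+1-T_q)\O_i$, and summing over $i$ proves the claim.

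I expect the one genuinely structural step — everything else being formal manipulation with idempotents and flatness of $\Z\to\Z_\ell$ — to be the freeness of $e_iM_\ell$ over $\O_i$, which holds because a finitely generated torsion-free module over a discrete valuation ring is free; this is exactly where $\ell\nmid\Disc(\T)$ enters, guaranteeing that $\O_i$ is a DVR. For $\ell\mid\Disc(\T)$ the factors of $\T_\ell$ may be non-maximal orders (or $\T_\ell$ may fail to split into a product of domains), $e_iM_\ell$ need not be free, and the cokernel on each factor need no longer be a quotient of $\O_i$ by a single element — which is why the proposition is stated only for $\ell\nmid n\Disc(\T)$.
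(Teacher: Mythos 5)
Your proof is correct and follows essentially the same route as the paper's: the key structural step in both is that $\ell\nmid\Disc(\T)$ makes $\T_\ell$ a product of DVRs, so the torsion-free rank-one $\T_\ell$-module $\Div^0(X)_\ell$ is free (your idempotent decomposition $M_\ell\simeq\bigoplus_i e_iM_\ell\simeq\bigoplus_i\O_i$ is precisely the paper's identification of $\Div^0(X)_\ell$ with $\T_\ell$), and $\ell\nmid n$ is used to pass from Proposition~\ref{prop:groupstructure} to $M_\ell/L_qM_\ell$. You spell out the steps the paper leaves implicit — that the subgroup $\langle\overline{L_qE}\rangle$ has order exactly $n$ and hence vanishes on $\ell$-parts, and the torsion-freeness/rank argument for each $e_iM_\ell$ — but the argument is the same one.
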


\begin{proof}
Since $\T_\ell$ is a product of discrete valuation rings, any rank one torsion-free module is free. After identifying $\Div^0(X)_\ell$ with $\T_\ell$, the action of $T_q$ is by left-multiplication. Since $T_q$ acts $\T_\ell$-linearly, it sends $\O_i$ to $\O_i$ and the result follows from Proposition \ref{prop:groupstructure} (since we also assume $\ell\nmid n$).  
\end{proof}

As explained in the introduction, we can realize the group $J_{p,q}[\ell^\infty]$ as $\T_\ell/\det(g_q-1)\T_\ell$ for some $\T_\ell$-linear map $g_q$ on a rank two $\T_\ell$-module $V_{\ell,q}$. However, the module we constructed was somewhat contrived, and in particular, depended on $q$.  To prove Theorem \ref{thm:mainabstract} we will find a {\it single} rank two $\T_\ell$-model $V_{\ell}$ and an operator $g_q$ on it (for each prime $q$) with the same properties.

We must first recall the connection to modular forms.  It is known that the algebra $\T$ is isomorphic to the Hecke algebra acting on the space of weight two cusp forms $S_2(\Gamma_0(p),\Z)$ with integer coefficients \cite[Thm.\ 3.1]{Emerton02}.  It follows that there is a bijection between the maximal ideals $\lambda_i$ of $\T_\ell$ and pairs $(f,\lambda)$, where $f = \sum a_n(f) q^n$ runs through a set of representatives for the $\Gal(\bar \Q/\Q)$-orbits of newforms in $S_2(\Gamma_0(p), \bar \Q)$, and $\lambda$ runs through the prime ideals in $\O_f$ above $\ell$, where $\O_f$ is the ring of integers of the number field generated by the coefficients $a_n(f)$. Since we assume $\ell \nmid \Disc(\T)$, the ring $\O_i$ is isomorphic to the completion $\O_{f,\lambda}$ of $\O_f$ at $\lambda$. Thus 
\[\T_\ell \simeq \bigoplus_{(f,\lambda)} \O_{f,\lambda},\]
where the sum is understood to be over the pairs $(f,\lambda)$ as before.  

The following description of $J_{p,q}[\ell^\infty]$ in terms of modular forms gives a convenient way to compute this group using any software which computes newforms and their Fourier coefficients.

\begin{proposition}\label{prop:decomp2}
If $\ell$ does not divide $n \Disc(\T)$, then
\[J_{p,q}[\ell^\infty] \simeq \bigoplus_{(f,\lambda)} \O_{f,\lambda}/(q+1 - a_q(f))\O_{f,\lambda},\]
where we view $a_q(f)$ in $\O_{f,\lambda}$ via the projection $\T_\ell \to \O_{f,\lambda}$.
\end{proposition}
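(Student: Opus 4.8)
The plan is to combine Proposition \ref{prop:decomp1} with the dictionary between the local factors $\O_i$ of $\T_\ell$ and the pairs $(f,\lambda)$ that was just set up. Proposition \ref{prop:decomp1} already gives
\[
J_{p,q}[\ell^\infty] \simeq \bigoplus_{i=1}^t \O_i/(q+1-T_q)\O_i ,
\]
and the discussion preceding this proposition identifies $\T_\ell \simeq \bigoplus_{(f,\lambda)} \O_{f,\lambda}$ with each $\O_i$ corresponding to a unique pair $(f,\lambda)$ and $\O_i \simeq \O_{f,\lambda}$. So the only thing left to check is that under this identification, the image of the Hecke operator $T_q \in \T$ in the factor $\O_{f,\lambda}$ is exactly $a_q(f)$, the $q$-th Fourier coefficient of the newform $f$.

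First I would recall precisely how the isomorphism $\T \simeq \T(S_2(\Gamma_0(p),\Z))$ of \cite[Thm.~3.1]{Emerton02} is normalized: it sends the geometric Hecke operator $T_q$ (the one acting on $\Div(X_{p,q})$ by summing over $q$-isogenous curves) to the classical Hecke operator $T_q$ on cusp forms. This is the content of the Eichler/Brandt-matrix description of the Hecke action on the supersingular module, and it is exactly the compatibility that makes the two rings "the same" $\T$. Next, for a newform $f = \sum a_n(f)q^n \in S_2(\Gamma_0(p),\bar\Q)$, the classical operator $T_q$ acts on $f$ by the eigenvalue $a_q(f)$; equivalently, under the decomposition $\T \otimes \bar\Q \simeq \prod \bar\Q$ indexed by the Galois conjugates of newforms, the $T_q$-component in the factor attached to $f$ is $a_q(f)$. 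Completing at $\ell$ and grouping the Galois-conjugate newforms whose coefficients lie in the same completion $\O_{f,\lambda}$ (which is legitimate because $\ell \nmid \Disc(\T)$ means $\O_i$ really is this completion, with no ramification or congruence ambiguity), the image of $T_q$ in $\O_{f,\lambda}$ is $a_q(f) \bmod \lambda^\infty$, i.e.\ the image of $a_q(f)$ under $\T_\ell \to \O_{f,\lambda}$.

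Substituting $a_q(f)$ for $T_q$ in each summand of Proposition \ref{prop:decomp1} then yields
\[
J_{p,q}[\ell^\infty] \simeq \bigoplus_{(f,\lambda)} \O_{f,\lambda}/(q+1 - a_q(f))\O_{f,\lambda},
\]
which is the claim. The only genuinely delicate point is the normalization/bookkeeping in the previous paragraph: one must be careful that the bijection between maximal ideals $\lambda_i$ of $\T_\ell$ and pairs $(f,\lambda)$ is set up so that $T_q \mapsto a_q(f)$ rather than, say, a twist or the dual, and that grouping Galois conjugates does not change which residue the operator reduces to. Since all of this is already implicit in the paragraph preceding Proposition \ref{prop:decomp2} (it is precisely the "$\T_\ell \simeq \bigoplus_{(f,\lambda)} \O_{f,\lambda}$" identification together with the Emerton isomorphism), the proof is essentially a one-line translation, and the main obstacle is simply citing the Hecke-equivariance of \cite[Thm.~3.1]{Emerton02} with the correct normalization.
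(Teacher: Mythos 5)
Your proof is correct and follows essentially the same route as the paper: start from Proposition \ref{prop:decomp1}, use the Emerton isomorphism $\T \simeq \T(S_2(\Gamma_0(p),\Z))$ to identify the local factors $\O_i$ with the completions $\O_{f,\lambda}$, and observe that the image of $T_q$ in the newform factor $\O_{f,\lambda}$ is $a_q(f)$. The paper's own proof is phrased in terms of the module isomorphism $\Div^0(X)_\ell \simeq S_2(\Gamma_0,\Z_\ell)$ rather than just the ring isomorphism, but the substance — the Hecke-equivariance of Emerton's theorem together with the newform eigenvalue computation — is identical, and your added remarks about normalization make explicit a point the paper leaves implicit.
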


\begin{proof}
We use the fact that $\Div^0(X)_\ell$ is isomorphic as $\T_\ell$-module to $S_2(\Gamma_0,\Z_\ell)$ \cite{Emerton02}. In fact, they are both free of rank 1, under the assumptions on $\ell$. Multiplication by $T_q$ on $\O_{f,\lambda}$ is given by multiplication by $a_p(f_i)$, since $f$ is a newform, and so the same must be true for the action of $T_q$ on $\O_i$. Thus, the result follows from Proposition \ref{prop:decomp1}.
\end{proof}
Let $X_0(p)$ be the modular curve parameterizing elliptic curves with a cyclic subgroup of order $p$. This is a smooth projective algebraic curve over $\Q$.  Let $J_0(p)$ be its Jacobian, an abelian variety over $\Q$ of dimension equal to the genus of $X_0(p)$, which is $g = n-1$.  For each $k$, let $J_0(p)[\ell^k]$ denote the group of $\ell^k$-torsion points in $J_0(p)$, which is a finite $G_\Q$-module, isomorphic as a group to $(\Z/\ell^k\Z)^{2g}$.  These $G_\Q$-modules form an inverse system under the multiplication-by-$\ell$ maps.  Let $V_\ell = \lim_k J_0(p)[\ell^k]$ be the inverse limit.\footnote{In the literature, $V_\ell$ is more often written as $T_\ell$ or $T_\ell J_0(p)$, but we have already used up this notation.}
Then $V_\ell$ is free of rank $2g$ over $\Z_\ell$ and carries a $\Z_\ell$-linear action of $G_\Q$.  

The Tate module $V_\ell$ has even more structure. The Hecke algebra $\T$ acts by correspondences on the curve $X_0(p)$, and hence acts by endomorphisms on $J_0(p)$. It is known that $\T = \End(J_0(p))$, since $p$ is prime \cite{MazurEisenstein}, but we will only use the containment $\T \subset \End(J_0(p))$. It follows that $V_\ell$ is a rank two $\T_\ell$-module. If $\ell \nmid \Disc(\T)$ or if $\ell > 2$, then Mazur showed that $V_\ell$ is moreover a free $\T_\ell$-module \cite[II.15-17]{MazurEisenstein}.  Thus, the $G_\Q$-action can be thought of as a representation
\[\rho_\ell \colon G_\Q \longrightarrow \Aut_{\T_\ell}(V_\ell)\] or
\[\rho_\ell \colon G_\Q \longrightarrow \GL_2(\T_\ell),\] 
if either $\ell \nmid \Disc(\T)$ or $\ell > 2$. The Galois representation $V_\ell$ is unramified at all primes $p \nmid N\ell$, so that the action of an arithmetic Frobenius element $\mathrm{Frob}_q \in G_\Q$ is well-defined.

\begin{proposition}\label{prop:eichlershimura}
For $\ell \nmid n\cdot \Disc(\T)p$, there is an isomorphism of $\T_\ell$-modules 
\[J_{p,q}[\ell^\infty] \simeq \T_\ell/\det(\rho_\ell(\mathrm{Frob}_q) - 1)\T_\ell\]
where $\mathrm{Frob}_q \in G_\Q$ is a Frobenius element for $q$.     
\end{proposition}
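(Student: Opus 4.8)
\emph{Proof proposal.} The idea is to read off the characteristic polynomial of $\rho_\ell(\mathrm{Frob}_q)$ from the Eichler--Shimura relation and then appeal to the modular description of $J_{p,q}[\ell^\infty]$ already established in Proposition \ref{prop:decomp2}. Assume $\ell \nmid n\Disc(\T)p$ and, so that $\mathrm{Frob}_q$ acts on $V_\ell$, also $q \neq \ell$ (harmless, since the proposition is only used as $q \to \infty$). Under these hypotheses $V_\ell$ is free of rank $2$ over $\T_\ell$, so $\rho_\ell(\mathrm{Frob}_q) \in \GL_2(\T_\ell)$ has a characteristic polynomial $\chi(x) = x^2 - \Tr(\rho_\ell(\mathrm{Frob}_q))\,x + \det(\rho_\ell(\mathrm{Frob}_q)) \in \T_\ell[x]$, where $\chi(x)=\det(x-\rho_\ell(\mathrm{Frob}_q))$, and because we are in rank $2$ one has $\det(\rho_\ell(\mathrm{Frob}_q) - 1) = \chi(1)$. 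So the whole proof reduces to the identity $\chi(x) = x^2 - T_q x + q$ in $\T_\ell[x]$: granting it, $\det(\rho_\ell(\mathrm{Frob}_q)-1) = 1 - T_q + q = q+1-T_q$, and since $T_q$ projects to the Hecke eigenvalue $a_q(f)$ on each factor of $\T_\ell \simeq \bigoplus_{(f,\lambda)}\O_{f,\lambda}$, the right-hand side $\T_\ell/(q+1-T_q)\T_\ell$ becomes $\bigoplus_{(f,\lambda)}\O_{f,\lambda}/(q+1-a_q(f))\O_{f,\lambda}$, which is $J_{p,q}[\ell^\infty]$ by Proposition \ref{prop:decomp2}.

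To pin down $\chi$ I would compute its two coefficients separately. The constant term: the determinant of $\rho_\ell$ is the $\ell$-adic cyclotomic character — this holds on each newform component $\rho_{f,\lambda}$ since the forms have weight $2$ and trivial nebentypus, and $\T_\ell$ embeds into $\bigoplus_{(f,\lambda)}\O_{f,\lambda}$ — so $\det(\rho_\ell(\mathrm{Frob}_q)) = q$. The linear term comes from the Eichler--Shimura congruence relation: $X_0(p)$ has good reduction at $q$ (as $q \neq p$), and on the special fibre $\mathrm{Frob}_q + V_q = T_q$ and $\mathrm{Frob}_q V_q = q$ in $\End(J_0(p)_{/\F_q})$, where $V_q$ is the Verschiebung; passing to $\ell$-adic Tate modules gives $\rho_\ell(\mathrm{Frob}_q)^2 - T_q\,\rho_\ell(\mathrm{Frob}_q) + q = 0$ in $\End_{\T_\ell}(V_\ell) = M_2(\T_\ell)$. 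Comparing with the Cayley--Hamilton identity $\rho_\ell(\mathrm{Frob}_q)^2 - \Tr(\rho_\ell(\mathrm{Frob}_q))\,\rho_\ell(\mathrm{Frob}_q) + q = 0$ and subtracting, $\bigl(T_q - \Tr(\rho_\ell(\mathrm{Frob}_q))\bigr)\rho_\ell(\mathrm{Frob}_q) = 0$; multiplying by $\rho_\ell(\mathrm{Frob}_q)^{-1} \in \GL_2(\T_\ell)$ forces $\Tr(\rho_\ell(\mathrm{Frob}_q)) = T_q$. This yields $\chi(x) = x^2 - T_q x + q$ and completes the argument. If one prefers to avoid the $\T_\ell$-level manipulations, the same conclusion follows factorwise: on the $(f,\lambda)$-component $V_\ell \otimes_{\T_\ell}\O_{f,\lambda}$ is the $\lambda$-adic Tate module of the newform quotient $A_f$, and the classical Eichler--Shimura relation gives $\Tr\rho_{f,\lambda}(\mathrm{Frob}_q) = a_q(f)$ and $\det\rho_{f,\lambda}(\mathrm{Frob}_q) = q$; since $T_q \mapsto (a_q(f))_{(f,\lambda)}$ under $\T_\ell \hookrightarrow \bigoplus_{(f,\lambda)}\O_{f,\lambda}$, the characteristic polynomials assemble to the stated one over $\T_\ell$.

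I expect the main obstacle to be not the algebra above but the bookkeeping underneath it: verifying that the three incarnations of the Hecke algebra — acting on $\Div^0(X_{p,q})$, on $S_2(\Gamma_0(p),\Z)$, and inside $\End(J_0(p))$ — are literally the same ring $\T$ acting compatibly, so that ``$T_q$'' means the same element everywhere, and that the Eichler--Shimura relation is available in the required integral, Hecke-equivariant form over a ring that need not be a domain. These are exactly the points where the hypotheses $\ell \nmid n$ (to compare $J_{p,q}[\ell^\infty]$ with the cokernel in Proposition \ref{prop:decomp1}) and $\ell \nmid \Disc(\T)$ (for the freeness of $V_\ell$ and the product decomposition of $\T_\ell$) are used, and where I would lean on \cite{Emerton02, MazurEisenstein, ShimuraEichler}.
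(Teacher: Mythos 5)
Your proposal is correct and follows essentially the same route as the paper: compute the characteristic polynomial of $\rho_\ell(\mathrm{Frob}_q)$ as $x^2 - T_q x + q$ via the Eichler--Shimura relation and the cyclotomic determinant, evaluate at $x=1$, and reduce to Proposition~\ref{prop:decomp2}. The paper carries out this computation directly on each factor $V_\lambda$ (which you also offer as your ``factorwise'' alternative), whereas your primary write-up works globally over $\T_\ell$ and extracts the trace via Cayley--Hamilton and invertibility of $\rho_\ell(\mathrm{Frob}_q)$; this is a presentational difference only.
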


\begin{proof}
If $\lambda$ is a maximal ideal in $\T$ and $k \geq 1$, let $J_0(p)[\lambda^k]$ denote the $G_\Q$-module of points in $J_0(p)(\bar \Q)$ killed by all elements in the ideal $\lambda^k$. Define $V_\lambda = \lim_k J_0(p)[\lambda^k]$, the $\lambda$-adic Tate module of $J_0(p)$.  Then we have a decomposition of $G_\Q$-representations:
\[V_\ell = \bigoplus_{(f,\lambda)} V_\lambda,\]
where $(f,\lambda)$ varies over orbits of newforms $f$ and prime ideals $\lambda$ in $\O_f$ lying above $\ell$, as before. It is known that $V_\lambda$ is free of rank two over $\O_i \simeq \O_{f,\lambda}$, and the determinant of $V_\lambda$ is the cyclotomic character $\chi$.  By the Eichler-Shimura relation \cite{ShimuraEichler}, the action of a Frobenius element $\mathrm{Frob}_q \in G_\Q$ on $V_\lambda$ has trace equal to $T_q \in \O_i$. Now, $T_q$ acts by $a_q(f)$ on $V_\lambda$ and $\chi(\mathrm{Frob}_q) = q$.  Thus, the characteristic polynomial of $\mathrm{Frob}_q$ on the rank two $\O_i$-module $V_\lambda$ is $x^2 - a_q(f)x + q$. We find that $\det(\rho_\ell(\mathrm{Frob}_q) - 1) = q+1 - a_q(f)$. The result now follows from Proposition \ref{prop:decomp2} and summing over all $(f,\lambda)$.  
\end{proof}

It is interesting to compare Proposition \ref{prop:eichlershimura} with the following result of Hashimoto \cite[4.3]{Li}.
 
\begin{theorem}\label{cor:hashimoto}
Let $X_0(p)$ be the modular curve of level $\Gamma_0(p)$ and let $J_0(p)$ be its Jacobian. Then \[n \cdot \#J_{p,q} = \#J_0(p)(\F_q)\]
\end{theorem}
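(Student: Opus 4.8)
The plan is to show that both $n\cdot\#J_{p,q}$ and $\#J_0(p)(\F_q)$ equal the integer
\[ N_q \;:=\; \prod_{f}\,\Nm_{K_f/\Q}\bigl(q+1-a_q(f)\bigr), \]
where $f$ runs over a set of representatives for the $\Gal(\bar\Q/\Q)$-orbits of newforms in $S_2(\Gamma_0(p),\bar\Q)$, $K_f=\Q(\{a_n(f)\})$ is the coefficient field of $f$, and $\Nm_{K_f/\Q}$ is the field norm (here applied to the algebraic integer $q+1-a_q(f)$, so that $N_q\in\Z$).

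First I would compute the graph side. By Proposition \ref{prop:groupstructure}, $n\cdot\#J_{p,q}=\det(q+1-T_q\mid\Div^0(X))$, i.e.\ the characteristic polynomial of $T_q$ on the free $\Z$-module $\Div^0(X)$ evaluated at $q+1$; this is unchanged after tensoring with $\Q$. Since $p$ is prime there are no weight-two cusp forms of level $1$, so every form in $S_2(\Gamma_0(p))$ is a newform, and the Hecke-equivariant isomorphism $\Div^0(X)\otimes\Q\simeq S_2(\Gamma_0(p),\Q)$ (used already in the proof of Proposition \ref{prop:decomp2}) lets me compute the determinant on the space of cusp forms. Over $\bar\Q$ this space has a Hecke eigenbasis consisting of the Galois conjugates $f^\sigma$ of the chosen newforms $f$, on which $T_q$ acts by $\sigma(a_q(f))$; hence $\det(q+1-T_q\mid\Div^0(X))=\prod_f\prod_{\sigma\colon K_f\hookrightarrow\bar\Q}\bigl(q+1-\sigma(a_q(f))\bigr)=N_q$.

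Next I would compute the modular-curve side. Since $q\neq p$, $J_0(p)$ has good reduction at $q$, and the Weil point-counting formula gives $\#J_0(p)(\F_q)=\deg(1-\pi)=P_q(1)$, where $\pi$ is the $q$-power Frobenius endomorphism of $J_0(p)_{\F_q}$ and $P_q(x)\in\Z[x]$ is its characteristic polynomial on an $\ell$-adic Tate module ($\ell\neq q$). By the Eichler–Shimura relation \cite{ShimuraEichler}, $\pi$ satisfies $\pi^2-T_q\pi+q=0$ on $J_0(p)_{\F_q}$; combined with the isogeny decomposition of $J_0(p)$ into the $\Q$-simple factors attached to the newform orbits (again using that all forms of level $p$ are new) this yields $P_q(x)=\prod_f\Nm_{K_f[x]/\Q[x]}\bigl(x^2-a_q(f)x+q\bigr)$ — exactly the input used, in $\lambda$-adic form, inside the proof of Proposition \ref{prop:eichlershimura}. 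Evaluating at $x=1$ gives $\#J_0(p)(\F_q)=\prod_f\Nm_{K_f/\Q}\bigl(q+1-a_q(f)\bigr)=N_q$, finishing the proof.

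The only genuine difficulty is bookkeeping: one must keep individual newforms distinct from their Galois orbits and reconcile the field norms appearing on the two sides, and one must invoke Eichler–Shimura in the sharp form giving the full characteristic polynomial of Frobenius, not merely its trace. I expect this matching to be the main (though routine) obstacle. One could alternatively try to assemble the identity from the $\ell$-primary isomorphisms of Proposition \ref{prop:eichlershimura}, but those hold only for $\ell\nmid 6n\Disc(\T)p$ and do not account for the factor $n$, so the global computation above via Proposition \ref{prop:groupstructure} is the cleaner route.
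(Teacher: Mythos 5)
Your proof is correct, and it completes---in a genuinely cleaner, fully global way---the argument that the paper only sketches. The paper attributes Theorem~\ref{cor:hashimoto} to Hashimoto and offers a proof sketch that invokes Weil's formula $\#A(\F_q)=\det(\mathrm{Fr}-1)$ and then ``the theorem follows from Propositions \ref{prop:groupstructure} and \ref{prop:eichlershimura}, and some additional care to treat the primes $\ell\mid n\Disc(\T)$''; that is, an $\ell$-by-$\ell$ assembly of the $\ell$-primary isomorphisms with the bad primes left to the reader. You instead work over $\Q$ from the start: via Proposition~\ref{prop:groupstructure} and the Hecke-equivariant identification $\Div^0(X)\otimes\Q\simeq S_2(\Gamma_0(p),\Q)$ you write $n\cdot\#J_{p,q}=\det(q+1-T_q)=\prod_f\Nm_{K_f/\Q}(q+1-a_q(f))$, while on the abelian-variety side you use Weil together with the Eichler--Shimura relation in its sharp form (char.\ poly.\ of Frobenius on each newform block $A_f$ equals $\Nm_{K_f[x]/\Q[x]}(x^2-a_q(f)x+q)$) to get $\#J_0(p)(\F_q)=P_q(1)=\prod_f\Nm_{K_f/\Q}(q+1-a_q(f))$. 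This avoids any discussion of the primes $\ell\mid n\Disc(\T)$, which is exactly the point you make in your final paragraph, and is the right way to get an unconditional identity rather than one holding only up to bad primes. The one small slip is your closing misquotation of the hypothesis in Proposition~\ref{prop:eichlershimura}, which requires $\ell\nmid n\Disc(\T)p$ (not $6n\Disc(\T)p$); this is cosmetic and does not affect the argument.
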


Let us sketch an alternative proof of this theorem. If $A$ is an abelian variety over $\F_q$, then $\#A(\F_q) = \det(\mathrm{Fr} - 1)$ where $\mathrm{Fr}$ is the action of the geometric Frobenius on the $\ell$-adic Tate-module of $A$ \cite{Mumford}. Moreover, if $A$ is the reduction of an abelian variety $\tilde A$ over $\Q$, then this action agrees with the action of  $\Frob_q$ on the $\ell$-adic Tate module of $\tilde A$.    Thus, the theorem follows from Propositions \ref{prop:groupstructure} and \ref{prop:eichlershimura}, and some additional care to treat the primes $\ell \mid n\Disc(\T)$.  

Note that $J_0(p)(\F_q)$ always contains a point of order $n = (p-1)/12$, namely the reduction $D$ of the cuspidal divisor $(0) - (\infty)$ on the modular curve. Theorem \ref{cor:hashimoto} says that the finite abelian groups $J_{p,q}$ and $J_0(p)(\F_q)/\langle D \rangle$ have the same cardinality. However, these two groups are not isomorphic in general as the former is a rank 1 $\T$-module whereas the latter is not. In any case, note that $X_{p,q}$ describes $q$-isogenies of (supersingular) elliptic curves in characteristic $p$, whereas $X_0(p)(\F_q)$ describes $p$-isogenies of elliptic curves in characeristic $q$. Thus, Hashimoto's formula can be viewed as a kind of reciprocity law between their Jacobians.

\section{Proof of Theorem \ref{thm:mainabstract}}\label{sec:proofThm1}
Let $L_q  = q+1 - T_q\in \T_\ell$ be the Laplacian operator for the graph $X_{p,q}$. As in the proof of Proposition \ref{prop:eichlershimura}, we have 
\[L_q = \det(\rho_\ell(\mathrm{Frob}_q) - 1).\]  Recall that $\T_\ell \simeq \bigoplus_{i = 1}^t \O_i$, where each $\O_i$ is a discrete valuation ring.  Let 
\[v \colon (\T_\ell \otimes \Q)^\times \to \bigoplus_{i = 1}^t \Z\] 
be the homomorphism which is the discrete valuation on each factor. By Proposition \ref{prop:eichlershimura}, the group $J_{p,q}[\ell^\infty]$ depends only on the tuple $v(L_q)$. Indeed, 
\[J_{p,q}[\ell^\infty] \simeq \bigoplus_{i=1}^t \O_i/\ell^{k_i}\O_i,\]
where $v(L_q) = (k_i)_{i = 1}^t$.   

Now let $G$ be the finite  $\T_\ell$-module in the Theorem.   Recall that 
\[\P(J_{p,q}[\ell^\infty] \simeq G) = \lim_{X \to \infty}\dfrac{\#\{p < X \colon J_{p,q}[\ell^\infty] \simeq G\}}{\#\{p < X\}},\]
and $\P(J_{p,q}[\ell^\infty] \simeq G) = 0$ unless $G = \bigoplus_{i=1}^t \O_i/\ell^{k_i}\O_i$, for some $\k = (k_i)\in \Z^t_{\geq 0}$. So we assume $G = G_\k$, for some such $\k$. 

Consider the ideal $I_\k = \{a \in \T_\ell \colon v(a) \geq \k\}$ in $\T_\ell$. By Proposition \ref{prop:eichlershimura}, there is an isomorphism $J_{p,q}[\ell^\infty] \simeq G$ if and only if the image of $\rho_\ell(\Frob_q)$ in $\GL_2(\T_\ell/I_\k)$ has 1 as an eigenvalue and the image of $\rho_\ell(\Frob_q)$ in $\GL_2(\T_\ell/I_{\bf{w}})$ does not have 1 as an eigenvalue for any $\bf{w} > \k$. In particular, we can detect whether $J_{p,q}[\ell^\infty] \simeq G$ from the image of $\rho_\ell(\Frob_q)$ in $\GL_2(\T_\ell/I_{\k+\bf{1}})$, where $\k + {\bf{1}} = (k_i + 1)_{i = 1}^t$.  

A result of Ribet \cite{RibetPac} states that if $\ell\nmid 6n \Disc(\T)$, then the image of the Galois representation $\rho_\ell \colon G_\Q \to \GL_2(\T_\ell)$ is precisely the group   
\[\G_\ell := \{g \in \GL_2(\T_\ell) \colon \det(g) \in \Z_\ell^\times\}\]
from the introduction. Hence the image of $\rho_\ell(G_\Q)$ in $\GL_2(\T_\ell/I_{\k+{\bf{1}}})$ is  
\[\G_\ell({\k+{\bf{1}}}) :=\{g \in \GL_2(\T_\ell/I_{\k+{\bf{1}}}) \colon \det(g) \in \Z_\ell^\times({\k+{\bf{1}}})\},\]
where $\Z_\ell^\times({\k+{\bf{1}}})$ is the image of $\Z_\ell^\times \to \T_\ell^\times \to (\T_\ell/I_{\k+{\bf{1}}})^\times$. Let $L$ be the finite Galois extension of $\Q$ which is the fixed field of the kernel of $\rho_\ell \colon G_\Q  \to \GL_2(\T_\ell/I_{\k+{\bf{1}}})$. Thus $\Gal(L/\Q) \simeq \G_\ell({\k+{\bf{1}}})$. Applying the Cebotarev density theorem to $L/\Q$, we find that $\P(J_{p,q}[\ell^\infty] \simeq G)$ is equal to the  proportion of elements of the finite group $\G_\ell({\k+{\bf{1}}})$ whose reduction in $\GL_2(\T_\ell/I_\k)$ has 1 as an eigenvalue, but whose reduction in $\GL_2(\T_\ell/I_{\bf{w}})$ does not have $1$ as an eigenvalue, for all tuples $\k < \bf{w} < \k+\bf{1}$.  By definition of the Haar measure on $\G_\ell$, this is equal to the probability that a random $g \in \G_\ell$ satisfies $\T_\ell/\det(g-1)\T_\ell \simeq G$, as desired. 

\section{Computing the distribution}

In this section we prove Theorems \ref{thm:mainexplicit}, \ref{thm:introcylicsylow}, and \ref{thm:introcyclic}.  Fix a prime $\ell$ and integer $d \geq 1$, and let $\lambda = \ell^d$.  Let $\O$ be the (unique) degree $d$ unramified cyclic extension of $\Z_\ell$. Thus, $\O$ is a discrete valuation ring containing $\Z_\ell$, with maximal ideal $\m = \ell \O$ and with residue field $\O/\m$ isomorphic to the finite field $\F_\lambda$.  The maximal ideal of $\Z_\ell$ is $\ell \Z_\ell$, so that its residue field $\Z_\ell /\ell\Z_\ell \simeq \F_\ell$ is naturally a subfield of $\O/\m \simeq \F_\lambda$. For any $k \geq 1$, we have an inclusion of rings $\Z_\ell/\ell^k \hookrightarrow \O/\m^k$. For $D \in \O/\m^k$, define 
\[N(k,D) = \#\{g \in \GL_2(\O/\m^k) \colon \det(g-1) = 0 \mbox{ and } \det(g) = D\}.\]

\begin{proposition}\label{prop:1eig}
Fix $k \geq 1$ and $D \in (\Z_\ell/\ell^k\Z_\ell)^\times$.  If $v(1 - D) = 0$, then 
\[N(k,D) = \lambda^{2k-1}(\lambda + 1).\]
\end{proposition}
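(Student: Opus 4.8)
The plan is to count pairs $(g, \lambda_1)$ where $g \in \GL_2(\O/\m^k)$ has $1$ as an eigenvalue (detected by $\det(g-1)=0$) and fixed determinant $D$; the hypothesis $v(1-D)=0$ means $D \not\equiv 1 \pmod{\m}$, so $g$ cannot have $1$ as a \emph{double} eigenvalue modulo $\m$, which is the whole point. First I would work modulo $\m$ (the $k=1$ case) and then lift to $\O/\m^k$ by Hensel-type / deformation arguments. Over the residue field $\F_\lambda$: the condition $\det(g-1)=0$ says $g$ has eigenvalue $1$; since $\det g = D \neq 1$ in $\F_\lambda$, the other eigenvalue is forced to be $D \neq 1$, so $g$ is regular semisimple with distinct eigenvalues $1, D$. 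The number of such $g$ is the size of a single conjugacy class of a regular semisimple element with eigenvalues $\{1,D\} \subset \F_\lambda$: the centralizer is a maximal split torus of order $(\lambda-1)^2$, so the class has size $|\GL_2(\F_\lambda)| / (\lambda - 1)^2 = \lambda(\lambda+1)$. That gives $N(1,D) = \lambda(\lambda+1) = \lambda^{2\cdot 1 - 1}(\lambda+1)$, matching the formula.

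Next I would pass from level $k$ to level $k+1$ and show $N(k+1,D') = \lambda^4 N(k,D)$ whenever $D' \in (\Z_\ell/\ell^{k+1})^\times$ reduces to $D$ and $v(1-D)=0$ (note $v(1-D')=0$ automatically). The kernel of $\GL_2(\O/\m^{k+1}) \to \GL_2(\O/\m^k)$ is the additive group $1 + \m^k M_2(\O/\m) \cong M_2(\F_\lambda)$, of order $\lambda^4$; similarly $\ker\big((\O/\m^{k+1})^\times \to (\O/\m^k)^\times\big) \cong \F_\lambda$ of order $\lambda$. Fix $g_0 \in \GL_2(\O/\m^k)$ counted by $N(k,D)$; I want to count its lifts $g = g_0(1 + \m^k X)$, $X \in M_2(\F_\lambda)$, satisfying $\det(g-1) \equiv 0 \pmod{\m^{k+1}}$ and $\det g = D'$. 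The determinant condition $\det g = D'$ is a single $\F_\lambda$-linear (affine) condition on $X$ via $\det(g_0(1+\m^k X)) = \det(g_0)(1 + \m^k \mathrm{tr}(X))$, so it cuts the $\lambda^4$ lifts down to $\lambda^3$ (it is nonvacuous because $D'$ and $\det g_0$ agree mod $\m^k$). The eigenvalue condition is the crux: since $g_0 - 1$ is invertible mod $\m$ (as $D \neq 1$ forces the char. poly. of $g_0$ mod $\m$ to be nonzero at $1$... wait—$g_0$ \emph{has} eigenvalue $1$ mod $\m$, so $g_0 - 1$ is \emph{not} invertible mod $\m$; the correct statement is that $g_0-1$ has rank exactly $1$ mod $\m$), the map $X \mapsto \det(g_0(1+\m^kX)) - 1 \bmod \m^{k+1}$, written via the multilinearity of $\det$, becomes an \emph{affine-linear} map $M_2(\F_\lambda) \to \F_\lambda$ in $X$ whose linear part is surjective precisely because $\mathrm{adj}(g_0 - 1) \neq 0 \bmod \m$ (which holds since $g_0 - 1$ has rank $1$, not $0$, mod $\m$). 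Hence this condition, intersected with the determinant condition, is a codimension-$\le 2$ affine subspace, and I must check the two conditions are independent (they are: one involves $\mathrm{tr}(X)$ weighted trivially, the other involves $X$ paired against $\mathrm{adj}(g_0-1)$, and these two functionals on $M_2(\F_\lambda)$ are linearly independent since $\mathrm{adj}(g_0-1)$ is a rank-one matrix, not a scalar). So the number of valid lifts is $\lambda^{4-2} = \lambda^2$ per $g_0$, giving $N(k+1, D') = \lambda^2 N(k,D)$...

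I need to recheck the bookkeeping: the target formula $N(k,D) = \lambda^{2k-1}(\lambda+1)$ demands a ratio $N(k+1)/N(k) = \lambda^2$, which is exactly what the lift count $\lambda^2$ gives, \emph{provided} we also sum over the $\lambda$ choices of $D'$ above $D$ — but $D$ is fixed in the statement, so no such sum occurs, and $N(k+1,D') = \lambda^2 N(k,D)$ is the right recursion. (If instead one summed over all $D'$, one would get $\lambda^3$; the cleanest formulation is therefore the fixed-$D'$ recursion above.) Combining the base case $N(1,D) = \lambda(\lambda+1)$ with $N(k+1,D) = \lambda^2 N(k,D)$ yields $N(k,D) = \lambda^{2(k-1)} \cdot \lambda(\lambda+1) = \lambda^{2k-1}(\lambda+1)$, as claimed.

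\textbf{Main obstacle.} The delicate point is verifying that the eigenvalue condition remains a genuinely codimension-one (not codimension-zero) affine condition at each lifting step and that it is independent of the determinant condition. This rests entirely on the hypothesis $v(1-D)=0$: it guarantees that $g_0 - 1$ has rank exactly $1$ modulo $\m$ (eigenvalues $1$ and $D\neq 1$), so that $\mathrm{adj}(g_0-1) \not\equiv 0$, which is what makes the relevant linear functional nonzero and distinct from the trace functional. Without this hypothesis (i.e.\ if $1 - D \in \m$) the matrix $g_0 - 1$ could be $\equiv 0 \bmod \m$, $\mathrm{adj}(g_0-1)$ would vanish, the Jacobian of the eigenvalue condition would degenerate, and the clean geometric-series count would break — which is exactly why the authors flag the $q \equiv 1 \pmod \ell$ case as harder. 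I would also double-check the very first step that $\det(g-1)=0$ over $\O/\m^k$ (a ring with nilpotents) is the right scheme-theoretic way to say "$1$ is an eigenvalue", i.e.\ that it is equivalent to the char. poly. of $g$ vanishing at $1$, which it is since $\det(g-1) = \chi_g(1)$ for $2\times 2$ matrices over any commutative ring.
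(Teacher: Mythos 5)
Your proof is correct and takes a genuinely different route from the paper's. The paper's proof is a direct elementary count: it fixes $a$ (forcing $d = 1-a+D$), observes $ad - D = (1-a)(a-D)$, counts pairs $(b,c)$ with $bc = (1-a)(a-D)$ via a small lemma, and then evaluates the resulting double sum over valuations of $1-a$ and $a-D$ — computationally heavy but self-contained. You instead compute the base case $N(1,D) = \lambda(\lambda+1)$ by a conjugacy-class count in $\GL_2(\F_\lambda)$ (regular semisimple element with distinct split eigenvalues $\{1,D\}$, centralizer a split torus of order $(\lambda-1)^2$), and then establish the recursion $N(k+1,D') = \lambda^2 N(k,D)$ by lifting through the kernel $1 + \m^k M_2(\F_\lambda)$, imposing two affine-linear conditions on the deformation parameter. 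This Hensel-type fiber count is conceptually cleaner, generalizes more readily, and makes the role of the hypothesis $v(1-D)=0$ transparent: it is exactly what forces $\bar g_0 - 1$ to have rank one (not zero) modulo $\m$, so that $\mathrm{adj}(\bar g_0 - 1) \neq 0$ and the linearized eigenvalue condition is genuinely codimension one and independent of the trace/determinant condition. (The same lifting heuristic appears in the paper's proof of Proposition \ref{prop:1eigpart2}, in the identity $M(k,D) = \lambda^3 N(k,\pi(D)) - N(k+1,D)$.) One small slip worth flagging: with your parameterization $g = g_0(1 + \m^k X)$, the eigenvalue condition pairs $X$ against $\mathrm{adj}(g_0-1)\,g_0$ rather than bare $\mathrm{adj}(g_0-1)$ (with the additive parameterization $g = g_0 + \m^k Z$ it pairs $Z$ against $\mathrm{adj}(g_0-1)$, while the determinant condition then pairs against $\mathrm{adj}(g_0)$ rather than $I$); either way the relevant matrix is still rank one, so it is not a scalar multiple of the other functional's matrix, and the independence argument goes through unchanged.
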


\begin{proof}
For this proof, we abuse the usual notation slightly and consider the ``reduced valuation''
\[v \colon \O/\m^k \to \{0, 1, \ldots, k\}\] 
defined by the formula $\pi^{-1}((a)) = \m^{v(a)}$, where $\pi \colon \O \to \O/\m^k$ is the reduction map and $(a)$ is the ideal generated by $a$.

We wish to compute the number 
\[N(k,D) = \#\left\{ \begin{pmatrix} a & b \\ c& d\end{pmatrix} \in \GL_2(\O/\m^k) \colon ad -bc = D \mbox{ and } 1 - a - d + D = 0\right\}.\] 
Now, if $a$ is fixed, then $d$ is determined by the formula  $d = 1 - a +D$. Note that $ad - D = (1-a)(a-D)$. 
Thus
\[
N(k,D) = \sum_{a \in \O/\m^k} \sum_{\substack{b \in \O/\m^k\\v(b) \leq v((1-a)(a-D))}}\sum_{\substack{c \in \O/\m^k\\bc = (1-a)(a-D)}}1
\]
\begin{lemma}
If $b, M \in \O/\m^k$ satisfy $v(b) \leq v(M)$ then the number of elements $c \in \O/\m^k$ such that $bc = M$ is $\lambda^{v(b)}$.
\end{lemma}
\begin{proof}
If $M = 0$, then $c$ satisfies $bc = 0$ if and only if $v(c) \geq k - v(b)$, and there are $\lambda^{k - (k - v(b))} = \lambda^{v(b)}$ such elements. If $M \neq 0$, then $c$ must have valuation $v(M) - v(b)$, and there are $\lambda^{k - v(M) + v(b) -1}(\lambda-1)$ such elements. Multiplying such elements by $b$, we are equally likely to obtain any element of valuation $v(M)$.  Thus, after dividing by the $\lambda^{k-v(M)- 1}(\lambda-1)$ elements of valuation $v(M)$, we find that $\lambda^{v(b)}$ elements $c$ satisfy $bc = M.$  
\end{proof}
By the Lemma, 
we have
\begin{equation}\label{eq:sum}
N(k,D) = \sum_{a \in \O/\m^k} \sum_{\substack{b \in \O/\m^k\\v(b) \leq v((1-a)(a-D))}}\lambda^{v(b)}.
\end{equation}
Since $v(1-D) = 0$, we can have $v(1-a) > 0$ and we can also have $v(a-D) > 0$ but never both. Moreover, $(1-a)(a-D) = 0$ if and only if $a = 1$ or $a = D$. Thus, we compute
\begin{align*}
N(k,D) = &\sum_{j = 1}^k\#\{a \colon v(1-a) = j\}\sum_{i = 0}^j\#\{b \colon v(b) = i\}\lambda^i + \\
&\sum_{j = 1}^k \#\{a \colon v(a-D) = j\} \sum_{i = 0}^j\#\{b \colon v(b) = i\}\lambda^i + \lambda^{2k-2}(\lambda-2)(\lambda-1)
\end{align*}
\begin{lemma}\label{lem:counts}
We have 
\[\#\{b \colon v(b) = i\}\lambda^i =\begin{cases}
\lambda^{k-1}(\lambda-1) & \mbox{ if } i < k\\
\lambda^k & \mbox{ if } i = k
\end{cases}. \]
\end{lemma}
\begin{proof}
Indeed, the number of $b \in \O/\m^k$ of valuation $i$ is $\lambda^{k-1-i}(\lambda-1)$ if $i < k$ and $1$ if $i = k$.  
\end{proof}
Separating the contributions from $a = 1$ and $a = D$, we have:
\[N(k,D) = 2(k+1)\lambda^k - 2k\lambda^{k-1} + \lambda^{2k-2}(\lambda-2)(\lambda-1) + A\]
where 
\[A = \sum_{j = 1}^{k-1}\#\{a \colon v(1-a) = j\}(j+1)\lambda^{k-1}(\lambda-1) + \sum_{j = 1}^{k-1} \#\{a \colon v(a-D) = j\} (j+1)\lambda^{k-1}(\lambda-1)\]
Using Lemma \ref{lem:counts}, we compute  
\[A = 2\lambda^{2k-1}(\lambda-1)^2\sum_{j = 2}^{k} \dfrac{j}{\lambda^j}\]
Combining the formula
\[\sum^n_{i = 0} i a^i = \dfrac{a - a^{n+1}}{(1-a)^2} - \dfrac{na^{n+1}}{1-a},\]
we eventually find that 
\[A = 2\lambda^{2k} - 2\lambda^k - 2(\lambda-1)k\lambda^{k-1} - 2(\lambda-1)^2\lambda^{2k-2}.\]
Thus, 
\begin{align*}
N(k,D) =& 2(k+1)\lambda^k - 2k\lambda^{k-1} + \lambda^{2k-2}(\lambda-2)(\lambda-1) 2\lambda^{2k} - 2\lambda^k - 2(\lambda-1)k\lambda^{k-1} - 2(\lambda-1)^2\lambda^{2k-2}\\
=& \lambda^{2k} + \lambda^{2k-1}\\
=& \lambda^{2k-1}(\lambda+1),
\end{align*}
as desired.
\end{proof}
For any $D \in \O/\m^{k+1}$, define the set
\[\mathcal{M}(k,D) = \{g \in \GL_2(\O/\m^{k+1}) \colon v(\det(g-1)) = k \mbox{ and } \det(g) = D\}\]
and let $M(k,D) = \#\mathcal{M}(k,D)$.
\begin{proposition}\label{prop:1eigpart2}
Fix $k \geq 0$ and $D \in (\O/\m^{k+1})^\times$. If $v(1-D) = 0$, then 
\[M(k,D) = 
\begin{cases} \lambda(\lambda-2)(\lambda+1) & \mbox{ if } k = 0\\
\lambda^{2k+1}(\lambda-1)(\lambda+1) & \mbox{ if } k > 0\\
\end{cases}
.
\]
\end{proposition}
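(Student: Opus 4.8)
The strategy I would follow is to descend from level $k+1$ to level $k$ and reuse Proposition \ref{prop:1eig}. Write $\mathcal{M}(k,D)$ as a set difference:
\[\mathcal{M}(k,D) = \{g \in \GL_2(\O/\m^{k+1}) : v(\det(g-1)) \geq k,\ \det g = D\} \smallsetminus \{g \in \GL_2(\O/\m^{k+1}): \det(g-1) = 0,\ \det g = D\},\]
since in $\O/\m^{k+1}$ the condition $v(\det(g-1)) \geq k+1$ is the same as $\det(g-1)=0$. The subtracted set has cardinality $N(k+1,D)$. Although Proposition \ref{prop:1eig} is phrased for $D \in (\Z_\ell/\ell^k\Z_\ell)^\times$, a direct inspection of its proof shows it only uses that $D$ is a unit with $v(1-D)=0$ and is really a computation inside $\O/\m^k$; so it applies verbatim over $\O$, giving $N(k+1,D) = \lambda^{2k+1}(\lambda+1)$.

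It then remains to compute $A_k := \#\{g \in \GL_2(\O/\m^{k+1}) : v(\det(g-1)) \geq k,\ \det g = D\}$. The case $k=0$ is immediate: the valuation condition is then vacuous, so $A_0 = \#\{g \in \GL_2(\F_\lambda) : \det g = D\} = \#\SL_2(\F_\lambda) = \lambda(\lambda^2-1)$, and subtracting $N(1,D) = \lambda(\lambda+1)$ leaves $\lambda(\lambda-2)(\lambda+1)$. For $k \geq 1$ I would use the reduction map $r \colon \GL_2(\O/\m^{k+1}) \to \GL_2(\O/\m^k)$. Reduction mod $\m^k$ identifies $\{v(\det(g-1)) \geq k\}$ with $r^{-1}$ of $\{\bar g : \det(\bar g-1)=0\}$; imposing also $\det g = D$ forces $\det \bar g \equiv D \pmod{\m^k}$, so $A_k = \sum_{\bar g} \#(r^{-1}(\bar g) \cap \{\det = D\})$, the sum being over the $N(k,D)$ matrices $\bar g \in \GL_2(\O/\m^k)$ with $\det(\bar g-1)=0$ and $\det \bar g \equiv D \pmod{\m^k}$. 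Because $2k \geq k+1$, the kernel of $r$ is $\{I+N : N \in M_2(\m^k/\m^{k+1})\} \cong (M_2(\F_\lambda),+)$, and $\det((I+N)g_0) = (1+\Tr N)\det g_0$ in $\O/\m^{k+1}$; since $N \mapsto \Tr N$ is a surjection $M_2(\m^k/\m^{k+1}) \to \m^k/\m^{k+1}$ with fibres of size $\lambda^3$, exactly $\lambda^3$ elements of each fibre $r^{-1}(\bar g)$ have determinant $D$. Hence $A_k = \lambda^3 N(k,D) = \lambda^{2k+2}(\lambda+1)$, and
\[M(k,D) = A_k - N(k+1,D) = \lambda^{2k+2}(\lambda+1) - \lambda^{2k+1}(\lambda+1) = \lambda^{2k+1}(\lambda-1)(\lambda+1),\]
as claimed.

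The main obstacle, and the step I would spell out carefully, is the fibre count in the $k \geq 1$ case: verifying that the kernel of reduction from level $k+1$ to level $k$ really is additively $M_2(\F_\lambda)$ (which breaks precisely at $k=0$, the case handled separately), that $\det(I+N) = 1 + \Tr N$ holds in the truncated ring because $\det N = 0$ there, and that each prescribed value of the determinant is attained by exactly $\lambda^3$ of the $\lambda^4$ matrices $N$. Everything else is bookkeeping, provided one first records that Proposition \ref{prop:1eig} is valid for any unit $D$ with $v(1-D)=0$ in a truncation of an unramified discrete valuation ring over $\Z_\ell$, not merely for $D$ coming from $\Z_\ell$.
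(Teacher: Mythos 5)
Your proposal is correct and follows essentially the same route as the paper: for $k=0$, subtract $N(1,D)$ from the size of the determinant-$D$ slice of $\GL_2(\F_\lambda)$; for $k\geq 1$, lift from level $k$ to level $k+1$, count that each $h$ with $\det(h-1)=0$, $\det h = \pi(D)$ has exactly $\lambda^3$ preimages of determinant $D$, and then subtract $N(k+1,D)$ to exclude the ones with $\det(g-1)=0$. Your parameterization of the kernel of reduction by $I+N$ and the identity $\det(I+N)=1+\Tr N$ (valid since $\det N\in\m^{2k}/\m^{k+1}=0$ for $k\geq1$) is equivalent to the paper's $g+\ell^kM$ computation, and your explicit remark that Proposition~\ref{prop:1eig} holds for any unit $D\in(\O/\m^k)^\times$ with $v(1-D)=0$ — not just $D$ coming from $\Z_\ell$ — is a point the paper uses implicitly, so it is good that you flag it.
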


\begin{proof}
First assume $k = 0$. Then $M(0,D)$ is the number of $g \in \GL_2(\F_\lambda)$ of determinant $D$ such that $\det(g-1) \neq 0$.  Now, the number of $g$ with determinant equal to $D$ is
\[\dfrac{\#\GL_2(\F_\lambda)}{\lambda-1} = (\lambda+1)\lambda(\lambda-1).\]
Thus, $M(0,D) = (\lambda+1)\lambda(\lambda-1) - N(1,D)$, and by Proposition \ref{prop:1eig}, this is 
\[(\lambda-1)\lambda(\lambda+1) - \lambda(\lambda+1) = \lambda(\lambda+1)(\lambda-2).\]
Now assume $k \geq 1$.  Let $\pi \colon \GL_2(\O/\m^{k+1}) \to \GL_2(\O/\m^k)$.  Recall that $N(k,\pi(D))$ is the cardinality of the set 
\[\mathcal{N}(k,\pi(D)) = \{h \in \GL_2(\O/\pi^k) \colon \det(h) = \pi(D) \mbox{ and }\det(h-1) = 0\}.\]
Then $\pi$ sends $\mathcal{M}(k,D)$ to $\mathcal{N}(k,\pi(D))$. Conversely, given $h \in \mathcal{N}(k,\pi(D))$, there is at least one $g \in \pi^{-1}(h)$ with $\det(g) = D$. Any other element of $\pi^{-1}(h)$ has the form $g + \ell^k M$ for some matrix $M$.  The determinant of $g+ \ell^k M$ is equal to
\[(a+\ell^k m_1)(d + \ell^k m_4) - (b + \ell^km_2)(c + \ell^k m_3) = \det(g) + \ell^k(am_4 + dm_1 - cm_2 - bm_3) \]    
and hence equals $D = \det(g)$ if and only if $M$ lies in the kernel of a certain non-zero linear functional $\mathrm{Mat}_2(\F_\lambda) \to \F_\lambda$. There are therefore $\lambda^3$ pre-images $g' \in \pi^{-1}(h)$ with $\det(g') = D$. Not all of them satisfy $v(\det(g-1)) = k$, but we have
\[M(k,D) = \lambda^3N(k,\pi(D)) - N(k+1,D).\]
By Proposition \ref{prop:1eig}, we have
\begin{align*}
M(k,D) =& \lambda^{2k+2}(\lambda+1) - \lambda^{2(k+1)-1}(\lambda+1)\\
=& \lambda^{2k+1}(\lambda(\lambda+1) - \lambda-1)\\
=& \lambda^{2k+1}(\lambda-1)(\lambda+1),
\end{align*}
as claimed.
\end{proof}

Now let $\k = (k_i)  \in \Z^t_{\geq0}$. Recall $\T_\ell = \bigoplus_{i = 1}^t \O_i$ and each $\O_i$ is a finite unramified extension of $\Z_\ell$ of degree $d_i \geq 1$. The residue field $\O_i/\m_i$ of $\O_i$ is isomorphic to $\F_{\lambda_i}$, where $\lambda_i = \ell^{d_i}$.  Recall that $\k$ determines a finite $\ell$-primary $\T_\ell$-module  $G_\k = \bigoplus_{i = 1}^t \O_i/\ell^{k_i}$. If $g \in \G_\ell$, then $\T_\ell/\det(g-1)\T_\ell \simeq G_\k$ if and only if $v(\det(g-1)) = \k$, where here $v \colon \T_\ell \to \Z^t$ is the combined valuation from Section \ref{sec:proofThm1}. 

Recall the ideals $I_{\k+1}$ and the sets $\Z_\ell^\times(\k+1)$. 
Having fixed $\k$ for the time being, we will abuse notation and write $v \colon \T_\ell/I_{\k + 1} \to \prod_{i = 1}^t \{0,1,\ldots, k_i+1\}$ for the ``combined reduced valuation''.   Let $k = \max_i k_i$, and observe that we may view $\Z_\ell/\ell^{k+1} \Z_\ell$ as a subring of $\T_\ell/I_{\k+1}$.  Define $\Z_\ell^{\diamondsuit}(\k+1)$ to be the subset of elements $D$ of this subring such that $v(D(1-D)) = 0$.
Now define 
\[\mathcal{M}(\k) = \left\{g \in \GL_2(\T_\ell/I_{\k + 1}) \colon \det(g) \in \Z_\ell^{\diamondsuit}(\k+1) \mbox{ and } v(\det(g-1)) = \k\right\}.\]
Finally, set $M(\k) = \#\mathcal{M}(\k)$. 

\begin{proposition}\label{prop:eigpart3}
Let $\delta_{ij} \in \{0,1\}$ be Kronecker's delta. Then 
\[M(\k) = \ell^k(\ell-2)\prod_{i = 1}^t \lambda_i^{2k_i + 1}(\lambda_i-1 - \delta_{0k_i})(\lambda_i+1).\]
\end{proposition}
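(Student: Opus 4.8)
The plan is to reduce the count over $\GL_2(\T_\ell/I_{\k+1})$ to a product of counts over the individual factors $\O_i/\m_i^{k_i+1}$, using the ring decomposition $\T_\ell/I_{\k+1} \simeq \bigoplus_{i=1}^t \O_i/\m_i^{k_i+1}$. Under this decomposition, a matrix $g \in \GL_2(\T_\ell/I_{\k+1})$ is the same as a tuple $(g_i)$ with $g_i \in \GL_2(\O_i/\m_i^{k_i+1})$, and the conditions on $g$ split componentwise: $v(\det(g-1)) = \k$ means $v(\det(g_i - 1)) = k_i$ for each $i$; and the condition $\det(g) \in \Z_\ell^{\diamondsuit}(\k+1)$ means the common value $D := \det(g) \in \Z_\ell/\ell^{k+1}\Z_\ell$ (diagonally embedded) satisfies $v(D(1-D)) = 0$. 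So the subtlety is that the determinants $\det(g_i)$ are not free: they must all equal a single scalar $D$ coming from $\Z_\ell$, whose reduction mod $\m_i^{k_i+1}$ is the relevant unit.

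First I would fix such a $D \in \Z_\ell/\ell^{k+1}\Z_\ell$ with $v(D(1-D)) = 0$ — there are $\ell^k(\ell-2)$ of them (the number of units $D$ mod $\ell^{k+1}$ with $D \not\equiv 0, 1 \pmod \ell$ is $\ell^{k+1} - 2\ell^k = \ell^k(\ell - 2)$). For each such $D$, the image $D_i$ of $D$ in $\O_i/\m_i^{k_i+1}$ is a unit with $v(1 - D_i) = 0$ (since $1 - D$ is already a $\Z_\ell$-unit), so the number of $g_i \in \GL_2(\O_i/\m_i^{k_i+1})$ with $\det(g_i) = D_i$ and $v(\det(g_i - 1)) = k_i$ is exactly $M(k_i, D_i)$ in the notation of Proposition \ref{prop:1eigpart2}, applied with $\lambda = \lambda_i$ and $k = k_i$. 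That proposition gives $M(k_i, D_i) = \lambda_i(\lambda_i - 2)(\lambda_i + 1)$ when $k_i = 0$ and $\lambda_i^{2k_i+1}(\lambda_i - 1)(\lambda_i + 1)$ when $k_i > 0$; both cases are uniformly $\lambda_i^{2k_i+1}(\lambda_i - 1 - \delta_{0k_i})(\lambda_i + 1)$, since when $k_i = 0$ one has $\lambda_i^{1}(\lambda_i - 2)(\lambda_i+1)$. Multiplying over $i$ and then summing over the $\ell^k(\ell-2)$ choices of $D$ — the count being independent of which $D$ we picked — yields
\[
M(\k) = \ell^k(\ell - 2)\prod_{i=1}^t \lambda_i^{2k_i+1}(\lambda_i - 1 - \delta_{0k_i})(\lambda_i + 1),
\]
as claimed.

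The one genuine point to check carefully — and the step I expect to be the main obstacle — is that the fibering over $D$ is legitimate: namely that once we fix the scalar $D$, the conditions on the components $g_i$ really are independent, so that $M(\k) = \sum_{D} \prod_i M(k_i, D_i)$. This is immediate from the product decomposition of the ring and the fact that $\det$, being polynomial in the matrix entries, respects it, so $\det(g) = (\det(g_i))_i$ and $\det(g-1) = (\det(g_i - 1))_i$; the "diagonal" constraint is exactly that the tuple $(\det(g_i))_i$ lies in the image of $\Z_\ell/\ell^{k+1}\Z_\ell$, which is precisely the set of tuples all of whose components are the common reduction of a single $D$. One should also double-check the edge case $k_i = 0$ in matching $M(k_i, D_i)$ to the claimed uniform formula and confirm that $v(1 - D_i) = 0$ holds in every factor (it does, because $1 - D$ is a unit in $\Z_\ell$ and the map $\Z_\ell \to \O_i$ is a local homomorphism, so units go to units). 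With these points in place the formula follows by the multiplicativity just described.
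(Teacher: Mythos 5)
Your proof is correct and follows essentially the same route as the paper's: fix a scalar $D \in \Z_\ell^{\diamondsuit}(\k+1)$ (of which there are $\ell^k(\ell-2)$), factor the count $M(\k,D)$ as $\prod_i M(k_i, D_i)$ via the product decomposition of $\T_\ell/I_{\k+1}$, and apply Proposition~\ref{prop:1eigpart2} to each factor. The only difference is that you spell out the "diagonal" constraint on determinants and the local-homomorphism point more explicitly than the paper does, which is a minor (and harmless) elaboration.
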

\begin{proof}
Recall that we view $\Z_\ell/\ell^{k+1}\Z_\ell$ as a subring of $\T_\ell/I_{\k+1}$ in the natural way.  We have 
\[M(\k) = \sum_{D \in \Z_\ell^{\diamondsuit}(\k+1)} M(\k,D)\] where \[M(\k,D) = \#\{g \in \GL_2(\T_\ell/I_{\k+1})  \colon \det(g)  = D \mbox{ and } v(\det(g - 1)) = \k\}.\] 
On the other hand, 
\[M(\k,D) = \prod_{i = 1}^t M(k_i,D_i),\]
where $D_i$ is the image of $D$ in $\Z_\ell/\ell^{k_i+1}$ and 
\[M(k_i,D_i) = \#\{g \in \GL_2(\O_i/\m_i^{k_i +1}) \colon \det(g) = D_i \mbox{ and } v_i(\det (g - 1)) = k_i\}.\] 
Then by Proposition \ref{prop:1eigpart2}, we have
\[M(\k, D) = \prod_{i = 1}^t \lambda_i^{2k_i + 1}(\lambda_i-1 - \delta_{0k_i})(\lambda_i+1),\]
where $\lambda_i = \#\O_i/\m_i$. Thus, 
\begin{align*}
    M(\k) =& \sum_{D \in \Z_\ell^{\diamondsuit}(\k+1)}M(\k,D) \\
    =&\ell^{k}(\ell-2)\prod_{i = 1}^t \lambda_i^{2k_i + 1}(\lambda_i-1 - \delta_{0k_i})(\lambda_i+1).
    \end{align*}
    as claimed.
\end{proof}
Let $\G_\ell' = \{g \in \G_\ell \colon \det(g) - 1 \in \Z_\ell^\times\}$. The following is an equivalent formulation of Theorem \ref{thm:mainexplicit}. 
\begin{theorem}\label{thm:distrib}
Fix $\k \in \Z^t_{\geq 0}$ and let $\delta_i = 1$ if $k_i = 0$ and $\delta_i = 0$ if $k_i > 0$. Then the proportion of $g \in \G_\ell'$ such that $\T_\ell/\det(g-1)\T_\ell \simeq G_\k$ is
\[\prod_{i = 1}^t \lambda_i^{-k_i}\left(1 - \dfrac{1}{\lambda_i - 1}\right)^{\delta_i} = \frac{1}{\#G_\k}\prod_{i=1}^t\left(1 - \dfrac{1}{\lambda_i - 1}\right)^{\delta_i}.\]
\end{theorem}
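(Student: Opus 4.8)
The plan is to reduce the computation of the proportion of $g \in \G_\ell'$ with $\T_\ell/\det(g-1)\T_\ell \simeq G_\k$ to the finite-level count $M(\k)$ of Proposition \ref{prop:eigpart3}, divided by the size of a suitable finite quotient of $\G_\ell'$. First I would make precise that membership in $\G_\ell'$ and the condition $v(\det(g-1)) = \k$ are detected at level $I_{\k+1}$: the reduction map $\G_\ell' \to \GL_2(\T_\ell/I_{\k+1})$ has image exactly the set of $g$ with $\det(g) \in \Z_\ell^{\diamondsuit}(\k+1)$ (using that the cyclotomic-character values $\det(\rho_\ell(\Frob_q))$, which we imagine as $q$, range over $\Z_\ell^\times$, and that $\det(g)-1 \in \Z_\ell^\times$ forces $v(D(1-D))=0$), and the fibers of this reduction all have the same cardinality because $\G_\ell'$ is a group and the map is a surjective group homomorphism onto $\G_\ell'(\k+1) := \{h \in \GL_2(\T_\ell/I_{\k+1}) : \det(h) \in \Z_\ell^{\diamondsuit}(\k+1)\}$. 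Consequently the desired proportion equals $M(\k)/\#\G_\ell'(\k+1)$.

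Next I would compute the denominator $\#\G_\ell'(\k+1)$. Since $\det \colon \GL_2(\O_i/\m_i^{k_i+1}) \to (\O_i/\m_i^{k_i+1})^\times$ is surjective with all fibers of size $\#\SL_2(\O_i/\m_i^{k_i+1}) = \lambda_i^{3k_i}\lambda_i(\lambda_i-1)(\lambda_i+1)$ (standard: $\#\SL_2$ over a finite local ring with residue field $\F_{\lambda_i}$ and length $k_i+1$), and since $\#\Z_\ell^{\diamondsuit}(\k+1) = \ell^k(\ell-2)$ (there are $\ell^k(\ell-1)$ units mod $\ell^{k+1}$, of which $\ell^k$ reduce to $1$ mod $\ell$ and must be excluded by the $v(1-D)=0$ condition — and $D \neq 0$ is automatic), I can assemble
\[
\#\G_\ell'(\k+1) = \ell^k(\ell-2)\prod_{i=1}^t \lambda_i^{3k_i+1}(\lambda_i-1)(\lambda_i+1).
\]
Here one must be a little careful that $\Z_\ell^{\diamondsuit}(\k+1)$ really parametrizes the allowed determinant values at this finite level: the determinant must lie in the image of $\Z_\ell^\times$ (this is the defining constraint of $\G_\ell$), and the extra constraint $\det(g)-1 \in \Z_\ell^\times$ defining $\G_\ell'$ translates, at level $\k+1$, to $v(1-D)=0$.

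Dividing $M(\k)$ by $\#\G_\ell'(\k+1)$ then gives
\[
\frac{M(\k)}{\#\G_\ell'(\k+1)} = \prod_{i=1}^t \frac{\lambda_i^{2k_i+1}(\lambda_i-1-\delta_i)(\lambda_i+1)}{\lambda_i^{3k_i+1}(\lambda_i-1)(\lambda_i+1)} = \prod_{i=1}^t \lambda_i^{-k_i}\,\frac{\lambda_i-1-\delta_i}{\lambda_i-1} = \prod_{i=1}^t \lambda_i^{-k_i}\Bigl(1 - \frac{\delta_i}{\lambda_i-1}\Bigr),
\]
and since $\delta_i \in \{0,1\}$ this equals $\prod_i \lambda_i^{-k_i}(1 - \tfrac{1}{\lambda_i-1})^{\delta_i}$, with $\prod_i \lambda_i^{-k_i} = 1/\#G_\k$ because $\#(\O_i/\ell^{k_i}\O_i) = \lambda_i^{k_i}$. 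Finally, I would note that passing from this statement about $\G_\ell'$ to Theorem \ref{thm:mainexplicit} (the statement about primes $q \not\equiv 1 \pmod \ell$) is exactly the content of Proposition \ref{prop:eichlershimura} combined with Cebotarev, as carried out in Section \ref{sec:proofThm1}: the primes $q \not\equiv 1\pmod\ell$ are precisely those whose Frobenius lands in $\G_\ell'$, and Cebotarev makes the proportion among such $q$ equal to the Haar proportion within $\G_\ell'$.

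The main obstacle I anticipate is bookkeeping rather than conceptual: verifying cleanly that all fibers of $\G_\ell' \to \G_\ell'(\k+1)$ have equal size (i.e.\ that this reduction map is genuinely a surjective homomorphism of topological groups, so Haar measure pushes forward to counting measure), and correctly identifying $\Z_\ell^{\diamondsuit}(\k+1)$ as the exact image of the determinant on $\G_\ell'$ at finite level — in particular checking that no spurious determinant values survive and that the count $\ell^k(\ell-2)$ is right. Everything else is a direct substitution of Propositions \ref{prop:eigpart3} and the $\SL_2$ order formula.
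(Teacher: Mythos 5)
Your proof follows essentially the same route as the paper's: reduce to the finite quotient $\G_\ell'(\k+1)$, compute $\#\G_\ell'(\k+1) = \ell^k(\ell-2)\prod_i \lambda_i^{3k_i+1}(\lambda_i^2-1)$ via the size of $\SL_2$ over each local ring and $\#\Z_\ell^{\diamondsuit}(\k+1)$, and divide $M(\k)$ (from Proposition~\ref{prop:eigpart3}) by it.

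One small slip worth fixing: $\G_\ell'$ is not a subgroup of $\G_\ell$ (for instance, two elements with determinant $-1$ multiply to an element with determinant $1 \notin \Z_\ell^\times - 1$), so the reduction $\G_\ell' \to \G_\ell'(\k+1)$ is not a group homomorphism. The conclusion you want — all fibers have equal measure, so the Haar proportion equals the counting proportion $M(\k)/\#\G_\ell'(\k+1)$ — is still true, but for a slightly different reason: $\G_\ell'$ is the full preimage of $\G_\ell'(\k+1)$ under the surjective group homomorphism $\G_\ell \to \G_\ell(\k+1)$ (the conditions $v_\ell(\det g)=0$ and $v_\ell(\det g - 1)=0$ are both detected mod $\ell$, hence at level $\k+\mathbf{1}$), so $\G_\ell'$ is a union of cosets of the kernel, and each coset has the same Haar measure. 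With that correction your argument matches the paper's.
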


\begin{remark}
The distribution above is a product of $t$ independent probability distributions, since
\[
\sum_{k = 0}^\infty \lambda^{-k}\left(1 - \dfrac{1}{\lambda - 1}\right)^{\delta_{0k}} = 1 - \frac{1}{\lambda-1} + \frac{1}{1-1/\lambda} -1 = 1\]
\end{remark}

\begin{proof}
Define 
\[\G_\ell'(\k+1):=\left\{g \in \GL_2(\T_\ell/I_{\k + 1}) \colon \det(g) \in \Z_\ell^{\diamondsuit}(\k+1)\right\}.\]
Then 
\begin{align*}
    \#\G_\ell'(\k+1) =& \#\Z_\ell^{\diamondsuit}(\k+1) \cdot\#\ker\left(\GL_2(\T_\ell/I_{\k+1}) \to (\T_\ell/I_{\k+1})^\times\right)\\
    =&\ell^k(\ell-2)\prod_{i = 1}^t \#\ker\left(\GL_2(\O_i/\m_i^{k_i+1}) \to (\O_i/\m_i^{k_i+1})^\times\right)\\
  =&\ell^k(\ell-2)\prod_{i = 1}^t \lambda_i^{4k_i}\frac{(\lambda_i^2 - 1)(\lambda_i^2 - \lambda_i)}{\lambda_i^{k_i}(\lambda_i-1)}\\
  =& \ell^k(\ell-2)\prod_{i = 1}^t \lambda_i^{3k_i+1}(\lambda_i^2 - 1)
  \end{align*}
Thus, by Proposition \ref{prop:eigpart3}, the proportion of $g \in \G_\ell'$ giving rise to $G_\k$ is
\[\prod_{i = 1}^t \dfrac{\lambda_i^{2k_i +1}(\lambda_i- 1-\delta_{0k_i})}{\lambda_i^{3k_i +1}(\lambda_i - 1)} = \prod_{i = 1}^t \lambda_i^{-k_i}\left(1 - \frac{1}{\lambda_i -1}\right)^{\delta_i}.\]
\end{proof}

For the next two corollaries, we fix a prime $\ell \nmid 6n\Disc(\T)$.
\begin{corollary}\label{cor:zero}
  As $q \to \infty$ through primes $q\not\equiv 1 \pmod\ell$, we have  
\[\P(J_{p,q}[\ell^\infty] = 0) = \prod_{i = 1}^t\left(1 - \frac{1}{\lambda_i - 1}\right).\]
\end{corollary}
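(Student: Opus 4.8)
The plan is to deduce this directly from Theorem \ref{thm:distrib} (equivalently Theorem \ref{thm:mainexplicit}) by specializing to the zero tuple. First I would observe, using Proposition \ref{prop:eichlershimura} as reformulated at the start of Section \ref{sec:proofThm1}, that $J_{p,q}[\ell^\infty] \simeq \bigoplus_{i=1}^t \O_i/\ell^{k_i}\O_i$ where $(k_i)_{i=1}^t = v(L_q)$. Hence $J_{p,q}[\ell^\infty]$ is trivial if and only if $\k = (0,\ldots,0)$, so that $\P(J_{p,q}[\ell^\infty] = 0) = \P(J_{p,q}[\ell^\infty] \simeq G_{\mathbf 0})$, with $G_{\mathbf 0} = 0$.

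Next I would identify the conditioning ``$q \not\equiv 1 \pmod \ell$'' with conditioning on $\rho_\ell(\Frob_q) \in \G_\ell'$. Since $\det \rho_\ell$ is the cyclotomic character, $\det \rho_\ell(\Frob_q) = q$, so the defining condition $\det(g) - 1 \in \Z_\ell^\times$ of $\G_\ell'$ is exactly $q \not\equiv 1 \pmod \ell$. Running the Cebotarev argument from the proof of Theorem \ref{thm:mainabstract} — with $L/\Q$ the fixed field of the kernel of $\rho_\ell \bmod I_{\bf 1}$, and restricting attention to those Frobenius classes landing in the image $\G_\ell'(\bf 1)$ of $\G_\ell'$ — shows that the conditional probability equals the proportion of $g \in \G_\ell'$ with $\T_\ell/\det(g-1)\T_\ell = 0$. (Here one also uses that the set of primes $q \not\equiv 1 \pmod\ell$ has positive density $(\ell-2)/(\ell-1)$, matching $\mu_{\G_\ell}(\G_\ell')$, so the conditional probability is well-defined.)

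Finally I would apply Theorem \ref{thm:distrib} with $\k = \mathbf 0$: every $k_i = 0$, so $\delta_i = 1$ and $\lambda_i^{-k_i} = 1$ for all $i$, giving proportion $\prod_{i=1}^t \left(1 - \tfrac{1}{\lambda_i - 1}\right)$, which is the asserted value. There is no real obstacle here; the statement is a clean specialization of the main distribution formula, and the only point needing (minor) care is the bookkeeping for the $q \not\equiv 1 \pmod\ell$ conditioning, handled exactly as in Section \ref{sec:proofThm1}.
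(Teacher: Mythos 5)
Your argument is correct and is exactly the one the paper intends: the corollary is an immediate specialization of Theorem \ref{thm:distrib} at $\k = \mathbf{0}$, with the $q \not\equiv 1 \pmod\ell$ condition corresponding to conditioning on $\G_\ell'$ via $\det\rho_\ell(\Frob_q) = q$ and the Cebotarev argument of Section \ref{sec:proofThm1}. The paper simply omits the proof because it is this direct specialization, which you have spelled out accurately.
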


\begin{corollary}\label{cor:cyclicproportion}
As $q \to \infty$ through primes $q \not\equiv 1 \pmod\ell$, the probability that $J_{p,q}[\ell^\infty]$ is a cyclic abelian group is \[\left(1 + \frac{t_1}{\ell-2}\right)\prod_{i = 1}^t\left(1 - \frac{1}{\lambda_i - 1}\right),\]
where $t_1$ is the number of $\O_i$ which are isomorphic to $\Z_\ell$ $($i.e. with $d_i= 1)$.

\end{corollary}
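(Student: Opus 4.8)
The plan is to apply Theorem~\ref{thm:mainexplicit} directly, after first translating the condition ``$J_{p,q}[\ell^\infty]$ is cyclic'' into a condition on the tuple $\k$. Recall that $G_\k \simeq \bigoplus_{i=1}^t (\Z/\ell^{k_i}\Z)^{d_i}$ as abstract abelian groups. Such a finite $\ell$-group is cyclic if and only if at most one cyclic summand is nontrivial, and this happens precisely when either $\k = 0$, or there is exactly one index $i$ with $k_i > 0$ and moreover $d_i = 1$ for that index. (Indeed, if $d_i > 1$ and $k_i > 0$, then $(\Z/\ell^{k_i}\Z)^{d_i}$ already contains a copy of $(\Z/\ell\Z)^2$, so it is not cyclic; and if two distinct indices have $k_i > 0$, the same obstruction appears.)

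First I would dispose of the case $\k = 0$: by Corollary~\ref{cor:zero} (equivalently, the $\k = 0$ instance of Theorem~\ref{thm:mainexplicit}), this contributes $\prod_{i=1}^t\left(1 - \frac{1}{\lambda_i-1}\right)$ to the probability that $J_{p,q}[\ell^\infty]$ is cyclic.

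Next, for each index $i_0$ with $d_{i_0} = 1$ --- there are $t_1$ of them --- I would sum the probabilities over all tuples $\k$ supported at $i_0$, i.e.\ with $k_{i_0} = k \geq 1$ and $k_j = 0$ for $j \neq i_0$. For such $\k$ we have $\#G_\k = \lambda_{i_0}^{k} = \ell^{k}$, using $d_{i_0}=1$, while $\delta_{0k_j} = 1$ exactly for the indices $j \neq i_0$. Hence Theorem~\ref{thm:mainexplicit} gives $\P(J_{p,q}[\ell^\infty]\simeq G_\k) = \ell^{-k}\prod_{j\neq i_0}\left(1 - \frac{1}{\lambda_j-1}\right)$. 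Summing the geometric series $\sum_{k\geq 1}\ell^{-k} = \frac{1}{\ell-1}$, and using that $d_{i_0}=1$ forces $\lambda_{i_0} = \ell$, so that $\prod_{j\neq i_0}\left(1-\frac1{\lambda_j-1}\right) = \frac{\ell-1}{\ell-2}\prod_{j=1}^t\left(1-\frac1{\lambda_j-1}\right)$, the total contribution from the index $i_0$ simplifies to $\frac{1}{\ell-2}\prod_{j=1}^t\left(1-\frac1{\lambda_j-1}\right)$.

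Finally I would add everything up: the $\k=0$ term together with the $t_1$ equal contributions coming from the indices with $d_i = 1$ yields $\left(1 + \frac{t_1}{\ell-2}\right)\prod_{i=1}^t\left(1-\frac1{\lambda_i-1}\right)$, which is the claimed formula. The only genuinely delicate point is the cyclicity characterization and correctly tracking which Kronecker deltas vanish for the tuples in question; once that is pinned down, the remainder is the elementary geometric-series computation sketched above, and no new input beyond Theorems~\ref{thm:mainexplicit} and Corollary~\ref{cor:zero} is needed.
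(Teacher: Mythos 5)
Your proof is correct and follows essentially the same route as the paper's: decompose the cyclicity event into the trivial case $\k = 0$ plus the $t_1$ cases where exactly one component (with $d_{i_0}=1$) is nontrivial, and sum the geometric series $\sum_{k\geq 1}\ell^{-k}=\frac{1}{\ell-1}$ to recover the paper's per-index contribution. The paper writes the second term directly as $\frac{1}{\ell-1}\prod_{j\ne i}\frac{\lambda_j-2}{\lambda_j-1}$ rather than expanding the geometric series, but the computation and the cyclicity characterization are identical.
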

\begin{proof}
The $\O_i$-component of $J_{p,q}[\ell^\infty]$ can only be non-trivial and cyclic if $d_i = 1$, or in other words $\lambda_i = \ell$. Indeed, $\O_i/\m  \simeq \F_{\lambda_i}$ is isomorphic to $(\Z/\ell)^{d_i}$ as an abelian group. Let us index the factors $\O_i$ so that $d_i= 1$ for $1 \leq i \leq t_1$.  Then the probability that $J_{p,q}[\ell^\infty]$ is cyclic is 
\[\prod_{i=1}^t \frac{\lambda_i -2}{\lambda_i - 1} + \sum_{i = 1}^{t_1}\frac{1}{\ell-1}\prod_{j \neq i}^t\frac{\lambda_j - 2}{\lambda_j - 1}\]
\[ = \kappa + \sum_{i = 1}^{t_1} \kappa\cdot \dfrac{\ell-1}{\ell-2}\cdot \frac{1}{\ell-1} = \kappa + \frac{t_1}{\ell-2}\kappa = \left(1 + \frac{t_1}{\ell-2}\right)\kappa,\]
where $\kappa = \prod_{i = 1}^t\frac{\lambda_i - 2}{\lambda_i - 1}$.
\end{proof}

What about analogues of the results for $q \equiv 1\pmod{\ell}$? In principle one can write a closed formula for the distribution of $J_{p,q}[\ell^\infty]$ (as in Theorem \ref{thm:distrib})  as $q \to \infty$ varies through primes such that $v_\ell(p-1) = k > 0$, for any fixed $k$, but the formulas seemed very complicated to us. For our purposes, we will be happy to simply give analogues of Corollaries \ref{cor:zero} and \ref{cor:cyclicproportion}.

\begin{proposition}
As $q \to \infty$ through $q \equiv1 \pmod{\ell}$, the probability that $J_{p,q}[\ell^\infty] = 0$ is 
\[\prod_{i = 1}^t \left(1 - \dfrac{\lambda_i}{\lambda_i^2-1}\right).\] 
\end{proposition}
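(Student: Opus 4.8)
The plan is to run the argument of Section \ref{sec:proofThm1} and Corollary \ref{cor:zero} essentially verbatim, but now conditioning the Frobenius determinant on the trivial element of $\F_\ell^\times$ instead of a nontrivial one. Everything is routine except one conjugacy-class count in $\SL_2$ of a finite field, and the one genuinely new phenomenon is that the determinant condition $q\equiv 1\pmod\ell$ correlates the two eigenvalues of the reduced Frobenius.

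First I would reduce to a finite group count, exactly as in Section \ref{sec:proofThm1}. By Proposition \ref{prop:eichlershimura}, $J_{p,q}[\ell^\infty]=0$ if and only if $\det(\rho_\ell(\Frob_q)-1)\in\T_\ell^\times$, a condition depending only on the image $\bar g$ of $\rho_\ell(\Frob_q)$ in $\GL_2(\T_\ell/I_{\mathbf 1})=\bigoplus_{i=1}^t\GL_2(\F_{\lambda_i})$; explicitly, it requires that $1$ not be an eigenvalue of $\bar g_i$ for each $i$. Since $\det\rho_\ell$ is the cyclotomic character, $\det\rho_\ell(\Frob_q)=q$, so the condition $q\equiv 1\pmod\ell$ is equivalent to $\det\bar g=1$, and again depends only on $\bar g$. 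Invoking Ribet's surjectivity result \cite{RibetPac} (valid since $\ell\nmid 6n\Disc(\T)$) together with the Cebotarev density theorem applied to the field cut out by $\rho_\ell\bmod I_{\mathbf 1}$, the probability in question equals
\[\frac{\#\{\bar g\in\G_\ell(\mathbf 1)\colon\det\bar g=1,\ 1\notin\mathrm{spec}(\bar g_i)\ \forall i\}}{\#\{\bar g\in\G_\ell(\mathbf 1)\colon\det\bar g=1\}}.\]

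Next I would identify the denominator. The image $\Z_\ell^\times(\mathbf 1)$ of $\Z_\ell^\times$ in $(\T_\ell/I_{\mathbf 1})^\times=\bigoplus_i\F_{\lambda_i}^\times$ is the diagonal copy of $\F_\ell^\times$, so the condition $\det\bar g=1$ forces $\det\bar g_i=1$ for every $i$, and conversely; hence $\{\bar g\in\G_\ell(\mathbf 1)\colon\det\bar g=1\}=\bigoplus_{i=1}^t\SL_2(\F_{\lambda_i})$. The displayed ratio therefore factors as a product over $i$, and it remains to show that for each finite field $\F_\lambda$ with $\lambda=\lambda_i$, the proportion of $h\in\SL_2(\F_\lambda)$ not having $1$ as an eigenvalue equals $1-\lambda/(\lambda^2-1)$.

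Finally comes the count, which is the one place the present case differs from Corollary \ref{cor:zero}: because $\det h=1$, if $1$ is an eigenvalue of $h$ then so is the other eigenvalue, so the $h\in\SL_2(\F_\lambda)$ having $1$ as an eigenvalue are exactly the unipotent elements, i.e.\ those with $\mathrm{tr}(h)=2$ (here we use $\ell\neq 2$). These consist of the identity together with the two regular unipotent conjugacy classes, each of size $(\lambda^2-1)/2$; equivalently they are the matrices $I+M$ with $M^2=0$, of which there are $\lambda^2$. Since $\#\SL_2(\F_\lambda)=\lambda(\lambda^2-1)$, the proportion with $1$ not an eigenvalue is $1-\lambda^2/(\lambda(\lambda^2-1))=1-\lambda/(\lambda^2-1)$, and taking the product over $i=1,\dots,t$ yields the claimed formula. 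I expect the only real subtlety, relative to the proof of Corollary \ref{cor:zero}, to be this forced correlation between the two eigenvalues; the Cebotarev bookkeeping and the reduction to $\bigoplus_i\SL_2(\F_{\lambda_i})$ are adaptations of Section \ref{sec:proofThm1}.
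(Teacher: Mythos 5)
Your proof is correct, and it reaches the same Cebotarev reduction as the paper (which Section \ref{sec:proofThm1} sets up and the paper's proof of this proposition treats as understood). Where you genuinely diverge is in the central matrix count. The paper plugs $k=1$, $D=1$ into the valuation identity (\ref{eq:sum}) and reads off $(\lambda-1)^2+(\lambda-1)+\lambda=\lambda^2$ for the number of $M\in\GL_2(\F_\lambda)$ with $\det M=1$ and $\det(M-1)=0$; this is mechanical but reuses machinery already built for Proposition \ref{prop:1eig}. You instead make the structural observation that $\det h=1$ forces the second eigenvalue of $h$ to equal $1$ whenever $1$ is an eigenvalue, so the relevant set is exactly the unipotent locus of $\SL_2(\F_\lambda)$ — identity plus two regular unipotent classes of size $(\lambda^2-1)/2$, or equivalently $I+M$ with $M$ nilpotent, giving $\lambda^2$ elements. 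Your route is more conceptual: it pinpoints precisely why the $q\equiv 1\pmod\ell$ case inflates the count from $\lambda(\lambda+1)$ (the $v(1-D)=0$ case of Proposition \ref{prop:1eig}, where $1$ being an eigenvalue forces the split semisimple class with distinct eigenvalues $1,D$) to $\lambda^2$, namely the appearance of unipotent Jordan blocks. This is exactly the correlation-of-eigenvalues phenomenon the paper alludes to in the introduction when discussing $v_\ell(q-1)$. One minor remark: your conjugacy-class decomposition uses $\ell\neq 2$, which is ensured by $\ell\nmid 6n\Disc(\T)$, but the $\lambda^2$ count via $\{I+M : M^2=0\}$ is characteristic-free, so either route suffices.
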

\begin{proof}
First let $\O = \O_i$ and $\lambda = \lambda_i$. Then the number of $M \in \GL_2(\O/\m)$ with  $\det(M) = 1$ and $\det(M-1) = 0$ is $(\lambda-1)^2 + (\lambda-1) + \lambda = \lambda^2$, by (\ref{eq:sum}). The total number of $M$ with determinant $1$ is 
\[\dfrac{\#\GL_2(\F_\lambda)}{\#\F_\lambda^\times} = \dfrac{(\lambda^2 - 1)(\lambda^2 - \lambda)}{\lambda-1} = \lambda(\lambda-1)(\lambda+1)\]
Thus the probability that such an $M$ has $\det(M-1) \neq 0$ is $1 - \frac{\lambda}{\lambda^2-1}$. The proposition now follows by taking the product over all $i$. 
\end{proof}

\begin{proposition}\label{prop:1modl}
As $q \to \infty$ through $q \equiv1 \pmod{\ell}$, the probability that $J_{p,q}[\ell^\infty]$ is cyclic is
\[\left(1 + \frac{t_1\ell}{\ell^2 -\ell-1}\right)\prod_{i = 1}^t\left(1 - \frac{\lambda_i}{\lambda_i^2 - 1}\right)\]
\end{proposition}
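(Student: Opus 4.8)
The plan is to run the Chebotarev argument of Section~\ref{sec:proofThm1} with the additional constraint $q\equiv 1\pmod\ell$, and then redo the matrix count of the preceding proposition while keeping track of which local factor, if any, is nontrivial. Since $\det\rho_\ell$ is the cyclotomic character, $\Frob_q$ fixes $\zeta_\ell$ --- equivalently $q\equiv 1\pmod\ell$ --- precisely when $\det\rho_\ell(\Frob_q)\equiv 1\pmod\ell$. Hence, exactly as in the proof of Theorem~\ref{thm:mainabstract} and of the preceding proposition, as $q\to\infty$ through primes $q\equiv 1\pmod\ell$ the group $J_{p,q}[\ell^\infty]$ is distributed as $\T_\ell/\det(g-1)\T_\ell$ for $g$ Haar-random in $\G_\ell^{(1)}:=\{g\in\G_\ell\colon \det g\equiv 1\pmod\ell\}$. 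Recall also that $J_{p,q}[\ell^\infty]\simeq\bigoplus_{i=1}^t(\Z/\ell^{k_i}\Z)^{d_i}$ with $k_i=v_i(\det(g-1))$, and that this group is cyclic if and only if at most one index $i$ has $k_i>0$ and, for that index, $d_i=1$ (equivalently, its $\ell$-torsion subgroup is at most one-dimensional over $\F_\ell$).

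Next I would condition on the common determinant. Under $\T_\ell\simeq\bigoplus_i\O_i$ write $g=(g_1,\dots,g_t)$ with $g_i\in\GL_2(\O_i)$; then $g\in\G_\ell^{(1)}$ forces all $\det g_i$ to share a common value $D$ in $\Z_\ell^\times$ with $D\equiv 1\pmod\ell$, and conditional on $D$ the reductions $\bar g_i\in\GL_2(\F_{\lambda_i})$ are independent and each Haar-uniform on $\{h\colon\det h=1\}$, because $D\equiv 1\pmod{\m_i}$. Whether $k_i=0$ depends only on $\bar g_i$: it is the event $\det(\bar g_i-1)\ne 0$, whose probability, by the count in the proof of the preceding proposition (via~(\ref{eq:sum})), is $1-\lambda_i/(\lambda_i^2-1)$. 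When $d_i=1$ the complementary event $k_i>0$ therefore has probability $\lambda_i/(\lambda_i^2-1)=\ell/(\ell^2-1)$. The essential point is that none of these probabilities depends on the particular value of $D$, so the conditioning may be dropped.

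Finally I would assemble the formula. By independence of the $\bar g_i$ given $D$,
\[
\Pob\bigl(J_{p,q}[\ell^\infty]\text{ cyclic}\bigr)=\prod_{i=1}^t\Pob(k_i=0)+\sum_{i\,:\,d_i=1}\Pob(k_i>0)\prod_{j\ne i}\Pob(k_j=0).
\]
Setting $\kappa=\prod_{i=1}^t\bigl(1-\tfrac{\lambda_i}{\lambda_i^2-1}\bigr)$ and using $\Pob(k_i=0)=\tfrac{\ell^2-\ell-1}{\ell^2-1}$ and $\Pob(k_i>0)=\tfrac{\ell}{\ell^2-1}$ at each of the $t_1$ indices with $d_i=1$, every summand equals $\tfrac{\ell}{\ell^2-1}\cdot\tfrac{\ell^2-1}{\ell^2-\ell-1}\,\kappa=\tfrac{\ell}{\ell^2-\ell-1}\kappa$, so the total is $\bigl(1+\tfrac{t_1\ell}{\ell^2-\ell-1}\bigr)\kappa$, which is the claimed expression.

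The only non-formal step is the second paragraph: one must verify that after fixing $D\equiv 1\pmod\ell$ the local components really are independent with the stated uniform laws, and in particular that the event ``$\det(g_i-1)\in\O_i^\times$'' depends on $g_i$ only through its reduction mod $\m_i$ and reduces to exactly the $\GL_2(\F_{\lambda_i})$-count already carried out, rather than to something that varies with $D$. A minor pitfall to avoid is over-refinement: at a factor with $d_i=1$ a nontrivial cyclic summand can have any $k_i\ge 1$, so one should track only the binary event $k_i>0$ and never the exact value of $k_i$.
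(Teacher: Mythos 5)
Your proof is correct and takes essentially the same route as the paper's: decompose $\T_\ell = \bigoplus_i\O_i$, observe that cyclicity requires at most one index $i$ with $k_i>0$ and then $d_i=1$, use the local $\GL_2(\F_{\lambda_i})$ counts (with $\det=1$) for $\Pob(k_i=0)$, and assemble via (conditional) independence. The only difference is that you spell out the disintegration-by-determinant argument justifying the independence of the events $\{k_i=0\}$; the paper leaves that step implicit, invoking the same product structure as in Corollary~\ref{cor:cyclicproportion}.
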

\begin{proof}
The probability is
\[\prod_{i=1}^t \frac{\lambda_i^2 - \lambda_i -1}{\lambda_i^2 - 1} + \sum_{i = 1}^{t_1}\frac{\ell}{\ell^2-1}\prod_{j \neq i}^t\frac{\lambda_j^2 - \lambda_j - 1}{\lambda_j^2 - 1}\]
\[ = \kappa + \sum_{i = 1}^{t_1} \kappa\cdot \dfrac{\ell^2 -1 }{\ell^2 - \ell -1}\cdot \frac{\ell}{\ell^2-1} = \kappa + \frac{t_1\ell}{\ell^2 - \ell - 1}\kappa = \left(1 + \frac{t_1\ell}{\ell^2 -\ell-1}\right)\kappa,\]
where $\kappa = \prod_{i = 1}^t\frac{\lambda_i^2 - \lambda_i - 1}{\lambda_i^2 - 1}$.
\end{proof}

Putting together Corollary \ref{cor:cyclicproportion} and Proposition \ref{prop:1modl}, we obtain the following, which is Theorem \ref{thm:introcylicsylow}.

\begin{corollary}\label{cor:cyclicpropfixedl}
As $q \to \infty$, the probability that $J_{p,q}[\ell^\infty]$ is cyclic is 
\[\frac{\ell-2 + t_1}{\ell-1}\prod_{i = 1}^{t}\left(1 - \frac{1}{\lambda_i-1}\right) + \left(\frac{\ell^2 + (t_1 - 1)\ell - 1}{\ell^3 - 2\ell^2 + 1}\right)\prod_{i = 1}^{t}\left(1 - \frac{\lambda_i}{\lambda_i^2 -1}\right).\]
As $\ell \to \infty$, this is $1 - O(\ell^{-2})$, with the implied constant depending only on $p$.  
\end{corollary}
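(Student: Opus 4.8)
\subsection*{Proof sketch for Corollary \ref{cor:cyclicpropfixedl}}

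The plan is to combine the two conditional computations already in hand — Corollary \ref{cor:cyclicproportion}, valid as $q\to\infty$ through primes $q\not\equiv1\pmod\ell$, and Proposition \ref{prop:1modl}, valid as $q\to\infty$ through primes $q\equiv1\pmod\ell$ — by weighting each according to the density of the corresponding set of primes. By Dirichlet's theorem on primes in arithmetic progressions (equivalently, Chebotarev for $\Q(\zeta_\ell)/\Q$), the primes $q\equiv1\pmod\ell$ have density $\tfrac{1}{\ell-1}$ among all primes, and the primes $q\not\equiv1\pmod\ell$ have density $\tfrac{\ell-2}{\ell-1}$ (the residue class $q\equiv0$ contains only $q=\ell$, which is discarded along with the finitely many other excluded primes, so densities are unaffected). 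Since the defining limit of $\P(J_{p,q}[\ell^\infty]\ \text{cyclic})$ splits as the sum of the contributions of the two residue classes, and each conditional frequency converges by Corollary \ref{cor:cyclicproportion} and Proposition \ref{prop:1modl} respectively, the unconditional probability equals the density-weighted average
\[\frac{\ell-2}{\ell-1}\left(1+\frac{t_1}{\ell-2}\right)\prod_{i=1}^t\left(1-\frac{1}{\lambda_i-1}\right)+\frac{1}{\ell-1}\left(1+\frac{t_1\ell}{\ell^2-\ell-1}\right)\prod_{i=1}^t\left(1-\frac{\lambda_i}{\lambda_i^2-1}\right).\]

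Next I would simplify the two scalar coefficients into the stated form. The first is immediate: $\tfrac{\ell-2}{\ell-1}\bigl(1+\tfrac{t_1}{\ell-2}\bigr)=\tfrac{\ell-2+t_1}{\ell-1}$. For the second, the key algebraic identity is the factorization $\ell^3-2\ell^2+1=(\ell-1)(\ell^2-\ell-1)$, after which
\[\frac{1}{\ell-1}\left(1+\frac{t_1\ell}{\ell^2-\ell-1}\right)=\frac{\ell^2-\ell-1+t_1\ell}{(\ell-1)(\ell^2-\ell-1)}=\frac{\ell^2+(t_1-1)\ell-1}{\ell^3-2\ell^2+1},\]
and the closed formula of the corollary follows.

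For the asymptotic assertion, I would use that $t=t(\ell)$ and $t_1=t_1(\ell)$ are both bounded by the $\Z$-rank $n-1$ of $\T$, a bound independent of $\ell$; since $n=(p-1)/12$, all implied constants then depend only on $p$. Substituting $\tfrac{1}{\ell-1}=\tfrac1\ell+O(\ell^{-2})$ throughout: $\tfrac{\ell-2+t_1}{\ell-1}=1+\tfrac{t_1-1}{\ell}+O(\ell^{-2})$, while $\prod_i\bigl(1-\tfrac{1}{\lambda_i-1}\bigr)=\bigl(1-\tfrac{1}{\ell-1}\bigr)^{t_1}\bigl(1+O(\ell^{-2})\bigr)=1-\tfrac{t_1}{\ell}+O(\ell^{-2})$, the $t-t_1$ factors with $d_i\geq2$ contributing only $1+O(\ell^{-2})$; multiplying, the first term is $1-\tfrac1\ell+O(\ell^{-2})$. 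Likewise the second coefficient is $\tfrac1\ell+O(\ell^{-2})$ and $\prod_i\bigl(1-\tfrac{\lambda_i}{\lambda_i^2-1}\bigr)=1+O(\ell^{-1})$, so the second term is $\tfrac1\ell+O(\ell^{-2})$. Adding, the $\pm\tfrac1\ell$ contributions cancel and the total is $1+O(\ell^{-2})$.

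I do not anticipate a genuine obstacle: the argument is elementary algebra plus one density input. The only point deserving care is the passage from conditional to unconditional frequency — one must confirm that the partition of the primes into the two residue classes mod $\ell$ is compatible with the definition of $\P$ used in Corollary \ref{cor:cyclicproportion} and Proposition \ref{prop:1modl}, so that their limiting values may legitimately be recombined with the Dirichlet weights; since the two classes exhaust all primes apart from $q=\ell$, this is automatic.
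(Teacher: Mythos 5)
Your proof is correct and takes essentially the same route as the paper: Dirichlet's theorem partitions the primes into the two residue classes mod $\ell$ with densities $\tfrac{\ell-2}{\ell-1}$ and $\tfrac{1}{\ell-1}$, Corollary~\ref{cor:cyclicproportion} and Proposition~\ref{prop:1modl} supply the two conditional limits, and the asymptotic $1-O(\ell^{-2})$ follows from the cancellation of the $\pm\tfrac{1}{\ell}$ terms after observing that only the $t_1$ degree-one factors contribute at order $\ell^{-1}$. Your write-up is somewhat more explicit than the paper's (which omits the algebra reducing the two coefficients to the stated closed form and does not spell out that $t, t_1$ are bounded in terms of $p$), but the argument is the same.
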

\begin{proof}
Call the desired probability $P$.   Then by Dirichlet's theorem on primes in arithmetic progressions, we have $P = (1-\frac{1}{\ell-1})P_A + \frac{1}{\ell-1}P_B$, where $P_A$ and $P_B$ are the probabilities computed in Corollary \ref{cor:cyclicproportion} and \ref{prop:1modl}, respectively.  This gives the claimed formula. To estimate the first term, we may ignore the factors with $\lambda_i = \ell^{d_i} > \ell$. Taking $x = \ell-1$, we compute
\[P_A \approx \left(1 + \frac{t_1}{x-1}\right)\left(1 - \frac{1}{x}\right)^{t_1} = 1 + O(x^{-2}) = 1 + O(\ell^{-2}).\]
Thus
\[P = \left(1 - \frac{1}{\ell-1}\right) + O(\ell^{-2}) + \frac{1}{\ell-1}\left(1 + O(\ell^{-1})\right) = 1 + O(\ell^{-2}),\]
as desired.
\end{proof}

Since $\prod_\ell (1 - O(\ell^{-2})) > 0$, this gives an explicit positive upper bound on the proportion of primes $p$ such that $J_{p,q}$ is cyclic.  It also suggests that a positive proportion of the groups $J_{p,q}$ are cyclic, but we cannot deduce this from our results. We at least have the following, which is Theorem \ref{thm:introcyclic}.
\begin{theorem}\label{thm:cyclicprop}
Let $S$ be the set of primes dividing $6n\Disc(\T)$.  The probability that $\bigoplus_{\ell \notin S} J_{p,q}[\ell^\infty]$ is cyclic is at most 
\[\displaystyle \prod_{\ell \notin S} \left(\frac{\ell-2 + t_1(\ell)}{\ell-1}\prod_{i = 1}^{t(\ell)}\left(1 - \frac{1}{\lambda_i-1}\right) + \left(\frac{\ell^2 + (t_1(\ell) - 1)\ell - 1}{\ell^3 - 2\ell^2 + 1}\right)\prod_{i = 1}^{t(\ell)}\left(1 - \frac{\lambda_i}{\lambda_i^2 -1}\right)\right),\]
where $t(\ell)$ is the number of factors in $\T_\ell$ and $t_1(\ell)$ is the number of degree one factors of $\T_\ell$. In particular, the above Euler product is also an upper bound on the probability that $J_{p,q}$ is cyclic.
\end{theorem}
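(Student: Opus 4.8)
The plan is to combine the single-prime computation of Corollary~\ref{cor:cyclicpropfixedl} with independence of the $\ell$-parts as $q$ varies. Write $c_\ell$ for the density of primes $q$ with $J_{p,q}[\ell^\infty]$ cyclic, as computed in Corollary~\ref{cor:cyclicpropfixedl}; recall $c_\ell \in (0,1]$ and $c_\ell = 1 - O(\ell^{-2})$, so the Euler product $\prod_{\ell \notin S} c_\ell$ converges. Since a finite abelian group is cyclic if and only if each of its Sylow subgroups is cyclic, for every prime $q$ one has
\[
J_{p,q}[1/S] = \bigoplus_{\ell \notin S} J_{p,q}[\ell^\infty] \text{ cyclic} \iff J_{p,q}[\ell^\infty] \text{ cyclic for every } \ell \notin S .
\]
Hence for any finite set $T$ of primes disjoint from $S$, the set $\{q : J_{p,q}[1/S] \text{ cyclic}\}$ is contained in $\{q : J_{p,q}[\ell^\infty] \text{ cyclic for all } \ell \in T\}$. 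I would first prove the joint equidistribution statement that this second set has density exactly $\prod_{\ell \in T} c_\ell$. Granting it, the upper density of $\{q : J_{p,q}[1/S] \text{ cyclic}\}$ is at most $\prod_{\ell \in T} c_\ell$ for every such finite $T$; as the $c_\ell$ lie in $[0,1]$, the right-hand side decreases to $\prod_{\ell \notin S} c_\ell$ when $T$ exhausts the primes outside $S$, which gives the bound. The final ``in particular'' is immediate: $J_{p,q}[1/S]$ is a direct factor of $J_{p,q}$, and a direct factor of a cyclic group is cyclic, so $J_{p,q}$ cyclic forces $J_{p,q}[1/S]$ cyclic.

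For the joint equidistribution over a finite $T$, I would run the Chebotarev argument of Section~\ref{sec:proofThm1} simultaneously for all $\ell \in T$. As in the proof of Corollary~\ref{cor:cyclicpropfixedl}, for each $\ell$ the event ``$J_{p,q}[\ell^\infty]$ cyclic of order at most $\ell^{K}$'' depends only on the image of $\mathrm{Frob}_q$ under $\rho_\ell$ modulo a fixed ideal $I_{\k_\ell + \mathbf 1}$ (with $\k_\ell \le (K,\dots,K)$), while the tail ``$\#J_{p,q}[\ell^\infty] > \ell^{K}$'' has density $o_K(1)$, uniformly. So, up to a uniformly small error, the joint event over $\ell \in T$ is detected by the image of $\mathrm{Frob}_q$ under $\prod_{\ell \in T}\rho_\ell \colon G_\Q \to \prod_{\ell \in T}\GL_2(\T_\ell/I_{\k_\ell + \mathbf 1})$, whose image is a finite group $G_T$. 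By the Chebotarev density theorem applied to its fixed field, the density of the joint event is the proportion of $g = (g_\ell)_{\ell \in T} \in G_T$ all of whose components lie in the prescribed subsets. If $G_T = \prod_{\ell \in T}\G_\ell(\k_\ell + \mathbf 1)$, this proportion factors over $\ell$, and letting $K \to \infty$ recovers $\prod_{\ell \in T} c_\ell$ by Corollary~\ref{cor:cyclicpropfixedl}.

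The main obstacle is thus the surjectivity of $\prod_{\ell \in T}\rho_\ell \colon G_\Q \to \prod_{\ell \in T}\G_\ell$, which yields $G_T = \prod_{\ell \in T}\G_\ell(\k_\ell + \mathbf 1)$ by reduction. I would prove it by a Goursat/Ribet argument, using that $S$ contains $2$ and $3$, so every $\ell \in T$ has $\ell \geq 5$ and $\ell \nmid n\Disc(\T)$. By Ribet's theorem \cite{RibetPac}, each $\rho_\ell$ surjects onto $\G_\ell$; and since $\ell \geq 5$ the groups $\SL_2(\O_i/\m_i^k)$ are perfect, so $[\G_\ell,\G_\ell] = \SL_2(\T_\ell) = \prod_i \SL_2(\O_i)$ and the determinant identifies $\G_\ell^{\mathrm{ab}}$ with $\Z_\ell^\times$. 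Let $H$ be the image of $\prod_{\ell \in T}\rho_\ell$. Then $H$ surjects onto each $\G_\ell$, so $\overline{[H,H]}$ surjects onto each $\SL_2(\T_\ell)$ while lying inside $\prod_{\ell \in T}\SL_2(\T_\ell)$. Next, any nontrivial quotient of $\SL_2(\T_\ell)$ is perfect, hence (being non-solvable) has a non-abelian simple composition factor, which must be some $\mathrm{PSL}_2(\F_{\ell^{d_i}})$ — the congruence kernels being $\ell$-groups; since for distinct $\ell, \ell' \geq 5$ the groups $\mathrm{PSL}_2(\F_{\ell^{d}})$ and $\mathrm{PSL}_2(\F_{\ell'^{d'}})$ are never isomorphic, $\SL_2(\T_\ell)$ and $\SL_2(\T_{\ell'})$ share no nontrivial common quotient. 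Goursat's lemma, iterated over $T$, then forces $\overline{[H,H]} = \prod_{\ell \in T}\SL_2(\T_\ell) = \ker\big(\det \colon \prod_{\ell \in T}\G_\ell \to \prod_{\ell \in T}\Z_\ell^\times\big)$. Finally, $\det \circ \prod_{\ell \in T}\rho_\ell$ is the product of the $\ell$-adic cyclotomic characters, which is surjective onto $\prod_{\ell \in T}\Z_\ell^\times$ because the fields $\Q(\mu_{\ell^\infty})$ for $\ell \in T$ have pairwise coprime conductors and so are linearly disjoint over $\Q$. Hence $H \supseteq \ker(\det)$ and $\det(H) = \prod_{\ell \in T}\Z_\ell^\times$, so $H = \prod_{\ell \in T}\G_\ell$, completing the argument.
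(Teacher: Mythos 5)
Your argument is correct and follows the same overall strategy as the paper: reduce the bound to joint equidistribution over a finite set $T$ of primes outside $S$, establish that equidistribution via a simultaneous Chebotarev argument, and pass to the limit (with the ``in particular'' handled by noting a direct factor of a cyclic group is cyclic). The one place you diverge is in how the joint surjectivity of $\prod_{\ell\in T}\rho_\ell$ onto $\prod_{\ell\in T}\G_\ell$ is obtained: the paper simply cites Ribet's theorem as applying ``not just to a single prime, but to any set of primes not contained in $S$,'' whereas you re-derive the multi-prime statement from the single-prime one via a Goursat-type argument using the perfectness of $\SL_2(\O_i)$ for $\ell\geq 5$, the non-isomorphism of $\mathrm{PSL}_2(\F_{\ell^d})$ for distinct $\ell\geq 5$, and the linear disjointness of the cyclotomic extensions. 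This is a legitimate, self-contained route and arguably clarifies why the independence holds, but it is in substance the same proof Ribet already supplies, so the two arguments are functionally equivalent.
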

\begin{proof}
Technically speaking, this does not follow from the previous results. However, Ribet's results apply not just to a single prime, but to any set of primes not contained in $S$ \cite{RibetPac}. This shows that for any finite set of primes $\ell_1,\ldots, \ell_m$ not in $S$, the distributions of the groups $J_{p,q}[\ell_i^\infty]$ are independent of each other. Taking a limit as $m \to \infty$ leads to the desired upper bound.  
\end{proof}
In fact, Ribet's result applies even to infinite sets of primes not contained in $S$. This suggests that with some extra work one might be able to prove a uniformity estimate and hence prove: 

\begin{conjecture}\label{conj:cyclicityconjecture}
The probability that $\bigoplus_{\ell \notin S} J_{p,q}[\ell^\infty]$ is cyclic is positive and equal to
\[\displaystyle \prod_{\ell \notin S} \left(\frac{\ell-2 + t_1(\ell)}{\ell-1}\prod_{i = 1}^{t(\ell)}\left(1 - \frac{1}{\lambda_i-1}\right) + \left(\frac{\ell^2 + (t_1(\ell) - 1)\ell - 1}{\ell^3 - 2\ell^2 + 1}\right)\prod_{i = 1}^{t(\ell)}\left(1 - \frac{\lambda_i}{\lambda_i^2 -1}\right)\right) > 0.\]
\end{conjecture}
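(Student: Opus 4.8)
\textbf{A possible approach to Conjecture \ref{conj:cyclicityconjecture}.}
The plan is to upgrade the upper bound of Theorem \ref{thm:cyclicprop} to an equality by establishing a uniform tail estimate, following the template of the unconditional proofs of Serre's cyclicity conjecture for the groups $E(\F_q)$ (Serre; Gupta--Murty; Cojocaru). Call a prime $q$ \emph{$\ell$-bad} if $J_{p,q}[\ell^\infty]$ is non-cyclic. Since $J_{p,q}[1/S]$ is cyclic precisely when $q$ is not $\ell$-bad for any $\ell\notin S$, and since — by Proposition \ref{prop:decomp1} — whether $q$ is $\ell$-bad is determined by which maximal ideals of $\T_\ell$ have $1$ as an eigenvalue of $\bar\rho_\ell(\Frob_q)$, the event ``$q$ is not $\ell$-bad for all $\ell\le z$, $\ell\notin S$'' is a Chebotarev condition in the field cut out by $\prod_{\ell\le z,\,\ell\notin S}(\bar\rho_\ell\bmod\ell)$. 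By Ribet's theorem \cite{RibetPac} these reductions are jointly surjective onto $\prod_{\ell\le z,\,\ell\notin S}(\G_\ell\bmod\ell)$, so the ordinary Chebotarev density theorem shows this event has density $\prod_{\ell\le z,\,\ell\notin S}P(\ell)$ as $X\to\infty$, where $P(\ell)$ is the quantity of Corollary \ref{cor:cyclicpropfixedl}. Since $P(\ell)=1-O_p(\ell^{-2})$, and since each $P(\ell)>0$ for $\ell\ge5$, to conclude (including the strict positivity) it suffices to prove
\[ \lim_{z\to\infty}\ \limsup_{X\to\infty}\ \sum_{\substack{\ell>z\\ \ell\notin S}}\Pob_{q\le X}\bigl(q\ \text{is}\ \ell\text{-bad}\bigr)=0. \]

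For the tail I would split at $\ell=(\log X)^{B}$ for a small $B=B(p)>0$. If $q$ is $\ell$-bad then $\dim_{\F_\ell}J_{p,q}[\ell]\ge2$, which forces $\bar\rho_\ell(\Frob_q)$ to have eigenvalue $1$ either in two distinct residue fields of $\T_\ell$ or in a single residue field of degree $\ge2$; each possibility defines a union of conjugacy classes of density $O_p(\ell^{-2})$ in $\Gal(L_\ell/\Q)$, where $L_\ell$ is the field cut out by $\bar\rho_\ell\bmod\ell$, unramified outside $p\ell$ and of degree at most $\ell^{3n-2}$ with $n=(p-1)/12$. For $z<\ell\le(\log X)^{B}$ the effective Chebotarev density theorem of Lagarias--Odlyzko (and Serre) then gives $\Pob_{q\le X}(q\ \text{is}\ \ell\text{-bad})=O_p(\ell^{-2})+O_p(e^{-c\sqrt{\log X}})$ uniformly in this range (a standard argument disposes of the possible Siegel zero of one exceptional $L_\ell$), so this part of the tail is $O_p(1/z)+o_X(1)$, which is enough.

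The main obstacle is the range $\ell>(\log X)^{B}$, where effective Chebotarev is unavailable. The crude bound — an $\ell$-bad $q$ satisfies $\ell^2\mid\#J_{p,q}\le\tfrac1n(\sqrt q+1)^{2(n-1)}$ by Deligne's bound, so $q$ is $\ell$-bad for at most $O_p(\log X/\log\log X)$ primes $\ell>(\log X)^B$ — loses a logarithm, so one must genuinely exploit the $O_p(\ell^{-2})$ density savings by a sieve. In the $\GL_2$ elliptic-curve case this works because non-cyclicity of $E(\F_q)$ at $\ell$ forces $\Frob_q$ to split completely in $\Q(E[\ell])\supseteq\Q(\zeta_\ell)$, hence $q\equiv1\pmod\ell$, and a large sieve in the $q$-aspect then takes over where effective Chebotarev fails; but here $\ell$-badness only forces $a_q(f)\equiv q+1\pmod{\lambda}$ for one or two prime ideals $\lambda\mid\ell$ of Hecke-eigenvalue rings, which imposes no congruence on $q$ alone. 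I would try to circumvent this either with a ``large sieve for Frobenius classes'' adapted to the family $\{L_\ell\}$ — bounding the relevant Dedekind zeta functions on average over $\ell$ — or by reformulating $\ell$-badness through the integers $N_f(q):=\Nm_{\O_f/\Z}(q+1-a_q(f))$ and applying a square sieve (Heath-Brown) to detect a square factor of $\prod_f N_f(q)$ coming from a single large prime, together with an elementary treatment of the very top range $\ell\asymp X^{(n-1)/2}$. Carrying one of these out, along with the bookkeeping over the $O_p(1)$ pairs of Hecke factors and over the primes dividing $\Disc(\T)$ (to which Ribet's theorem does not apply), is precisely the ``extra work'' required, and is the step I expect to be hardest.
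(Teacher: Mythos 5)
The statement you are addressing is stated as a \emph{conjecture} in the paper, not a theorem: the authors prove only the corresponding upper bound (Theorem \ref{thm:cyclicprop}) and explicitly say that turning it into an equality ``with some extra work'' is left open. There is therefore no proof in the paper against which to compare your proposal; it is to be judged on its own.

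Your strategy is the standard template from the Serre/Gupta--Murty/Cojocaru attack on cyclicity of $E(\F_q)$: reduce to a tail bound, split at a threshold, handle small and medium $\ell$ by Chebotarev, and handle large $\ell$ by a sieve. Your treatment of the first two ranges is sound. You are right that $\ell$-badness of $q$ (non-cyclicity of $J_{p,q}[\ell^\infty]$) is detected already mod $\ell$: by Proposition \ref{prop:decomp1}, $J_{p,q}[\ell^\infty]\simeq\bigoplus_i\O_i/\ell^{k_i}\O_i$, and this is non-cyclic exactly when two factors have $k_i>0$ or one factor with $d_i\ge2$ has $k_i>0$, each of which is the condition that $\bar\rho_\ell(\Frob_q)\bmod\m_i$ has $1$ as an eigenvalue. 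The resulting set of conjugacy classes has density $O_p(\ell^{-2})$ once Ribet's surjectivity applies, the degree bound $\ell^{3n-2}$ for the cut-out field $L_\ell$ is correct, and ramification is confined to $\{p,\ell\}$, so the Lagarias--Odlyzko range $z<\ell\le(\log X)^{B}$ with $B=B(p)$ small is indeed workable.

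However, this is not a proof, and you say so yourself: the range $\ell>(\log X)^{B}$ is left entirely open. You correctly identify why the classical mechanism breaks. In the $E(\F_q)$ case, non-cyclicity at $\ell$ forces $\mu_\ell\subset\F_q$ and hence $q\equiv1\pmod\ell$, which hands the tail to a large sieve in the $q$-aspect. Here the determinant condition lives in the residue rings $\O_i/\m_i$ of the Hecke algebra, not in $\F_\ell$, so $\ell$-badness imposes no congruence on $q$ alone; the only thing you can say unconditionally is the trivial $\ell^2\mid\#J_{p,q}$ bound, which, as you note, loses a logarithm. The substitutes you float --- a Kowalski-style large sieve for Frobenius over the family $\{L_\ell\}$, or a square sieve on $\prod_f N_f(q)$ with $N_f(q)=\Nm_{\O_f/\Z}(q+1-a_q(f))$ --- are reasonable candidates, but neither is carried out, and each has a real difficulty (the degrees $[L_\ell:\Q]\asymp\ell^{3n-2}$ grow with $\ell$ in the first; the square-sieve route must distinguish a square factor coming from two distinct Hecke primes above $\ell$ from one coming from a single residue field of degree $\ge2$, and must also treat $\ell$ up to $X^{(n-1)/2}$). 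You also wave at ``a standard argument'' for a putative Siegel zero of a single exceptional $L_\ell$; this should be spelled out, since the usual Deuring-type or Stark-type arguments are not automatic for these $\GL_2(\T/\ell)$-extensions. In short: the structure of your approach is plausible and consistent with what the authors hint at, but the decisive tail estimate is unproven, so Conjecture \ref{conj:cyclicityconjecture} remains open after your sketch.
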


\section{Examples}\label{sec:examples}
We make our results completely explicit for the two smallest values of $p$.  One may do similar computations for any value of $p$, after computing the Hecke algebra $\T$ in sage or magma.

\subsection{Case $p = 37$}
In this case $\#X_p = n = (37-1)/12 = 3$ and hence dim $S_2(\Gamma_0(37)) = 2$. The two newforms $f_1$ and $f_2$ of level 37 have rational coefficients. It follows that $\T$ is a subring of $\Z \times \Z$, and by a discriminant computation in SageMath \cite{sagemath}, we find that it has index two (coming from the congruence $f_1 \equiv f_2 \pmod 2$). Thus $\T_\ell = \Z_\ell \times \Z_\ell$ for $\ell > 2$.  By Theorem \ref{thm:distrib}, if $\ell > 3$ is fixed and the primes $q \not\equiv 1 \pmod \ell$ go to $\infty$, the probability that $J_{37}(q)[\ell^\infty] \simeq \Z/\ell^{k_1} \times \Z/\ell^{k_2}$ (as a $\T_\ell$-module) is
\begin{equation}\label{eq:37dist}
    \frac{1}{\ell^{k_1+k_2}}\left( \dfrac{\ell-2}{\ell-1}\right)^{\delta_{0k_1}}\left( \dfrac{\ell-2}{\ell-1}\right)^{\delta_{0k_2}}.
\end{equation}
In particular, for fixed $\ell$ and for $q \not\equiv 1\pmod{\ell}$ going to $\infty$, we have \[\P(J_{37}(q)[\ell^\infty] = 0) = \left( \dfrac{\ell-2}{\ell-1}\right)^2.\] 
To determine the distribution of the underlying abelian groups (ignoring the $\T_\ell$-module structure), treat the two factors as unordered. For example, the probability that $J_{37}(q)[\ell^\infty] \simeq \Z/\ell \times \Z/\ell^2$ as abelian groups is 
\[\P\left(J_{37}(q)[\ell^\infty] \simeq \Z/\ell \times \Z/\ell^2\right) + \P\left(J_{37}(q)[\ell^\infty] \simeq \Z/\ell^2 \times \Z/\ell\right) = 2 \cdot \frac{1}{\ell^3}.\]
\begin{example}{\em
The $(i,j)$ entry of the following matrix is the number of primes $q \not\equiv1 \pmod5$ less than $14000712$ with $J_{37}(q)[5^\infty] \simeq \Z/5^{i-1} \times \Z/5^{j-1}$ (as abelian groups), with $i \leq j$.
\[
\begin{pmatrix}
409362 &  218950 &  43483 & 8787& 1829& 359& 69& 12& 1& 0\\
0& 29077& 11591& 2239& 456& 103& 22& 7& 2& 0\\
0& 0& 1132& 465& 92& 15& 4& 1& 0& 0\\
0& 0& 0& 45& 20& 1& 1& 0& 0& 0\\
0& 0& 0& 0& 4& 0& 0& 0& 0& 0
\end{pmatrix}
\]
We see that the group $\Z/5 \times \Z/25$, say, shows up a proportion of $\frac{11591}{728129} \approx  .015919$ of the time, compared to the true asymptotic proportion $\frac{2}{5^3} = .016$. 
}
\end{example}

For any $\ell > 3$, Corollary \ref{cor:cyclicpropfixedl} says that the proportion of $q$ for which $J_{37}(q)[\ell^\infty]$ is cyclic is 
\[1 - \dfrac{(\ell+2)(\ell^2 -\ell-1)}{(\ell-1)^3(\ell+1)^2}.\]
If we restrict to $q \not \equiv 1\pmod\ell$, then by Corollary \ref{cor:cyclicpropfixedl} this proportion is 
\[\left(1 + \frac{2}{\ell-2}\right)\left(\dfrac{\ell-2}{\ell-1}\right)^2 = \dfrac{\ell(\ell-2)}{(\ell-1)^2}.\]
For example, when $\ell = 5$, this is $\frac{15}{16} = .9375$, which can be compared with the proportion $.937817\ldots$ computed from the data in the matrix above. If we allow primes up to $19000853$, then this proportion becomes $.937752\ldots$, consistent with the convergence to $.9375$.

Conjecture \ref{conj:cyclicityconjecture} says that the probability that $J_{37}(q)[1/6]$ is cyclic is
\[\prod_{\ell > 3} \left(1 - \dfrac{(\ell+2)(\ell^2 -\ell-1)}{(\ell-1)^3(\ell+1)^2}\right) = .885\ldots\]
\subsection{Case $p = 61$}
We have $n = (61-1)/12 = 5$, so $\dim S_2(\Gamma_0(61)) = 4$. There are two $\Gal(\bar \Q/\Q)$-orbits of newforms $f_1$ and $f_2$. The fields generated by their Fourier coefficients are $\Q$ and $K = \Q[x]/(x^3 - 30x - 2)$, respectively. The latter is a cubic field of discriminant $148 = 2^2\cdot 37$. Sage reports that the Hecke algebra $\T$ has discriminant $2^4 \cdot 37$ and hence is index two in the maximal order $\Z \times \O_K$. The non-maximality of $\T$ comes from the congruence $f_1 \equiv f_2 \pmod \lambda$, where $\lambda$ is the unique prime of $K$ above $2$.  

Thus, for any fixed $\ell\notin\{2,5,37\}$, we have \[\T_\ell \simeq \Z_\ell \times \bigoplus_{\lambda \mid \ell} \O_\lambda,\] 
where the sum is over the primes $\lambda$ of $\O_K$ above $\ell$ and $\O_\lambda$ is the completion of $\O_K$ at $\lambda$. Note that $\O_\lambda$ is unramified of degree $f_\lambda$, where $f_\lambda$ is the residual degree: $\O_K/\lambda \simeq \F_{\ell^{f_\lambda}}$.  The residual degrees are either $(1,1,1)$, $(1,2)$, or $(3)$, depending on how $\ell$ splits in $\O_K$. By Cebotarev's density theorem, the proportion of primes $\ell$ with the given splitting type is $1/6$, $1/2$, and $1/3$, respectively.

For example, the prime $13$ splits as $\mathfrak{l}_1\mathfrak{l}_2$ in $\O_K$, with $\mathfrak{l}_i$ having norm $13^i$. Thus, $\lambda_1 = \lambda_2 = 13$ and $\lambda_3 = 13^2$, and  \[J_{61}(q)[13^\infty] \simeq (\Z/13^a) \times(\Z/13^b) \times (\Z/13^c)^2\]
for integers $a,b,c \geq 0$. For primes $q \not\equiv1\pmod{13}$, the probability for the tuple $(a,b,c)$ can be read off of Theorem \ref{thm:mainexplicit}. For example, the probability that $J_{61}(q)[13^\infty] \simeq (\Z/13)^3$, as abelian groups, is 
\[2\cdot \dfrac{1}{13}\dfrac{11}{12}\dfrac{1}{13^2} \approx .000834,\]
corresponding to the two tuples $(1,0,1)$ and $(0,1,1)$. In Table \ref{table:N=61}, we compare the asymptotic proportions with the observed proportion for the first $62772$ primes $q \not\equiv 1\pmod{13}$. 

\begin{table}[h]
	\caption{Distribution of the group $J_{61}(q)[13^\infty] \simeq \prod_{i = 1}^4(\Z/13^{a_i})$, for $q\not\equiv1\pmod{13}$}\label{table:N=61}

\begin{tabular}{|c|c|c|}
	\hline
$(a_1,a_2,a_3,a_4)$ & \mbox{Limiting proportion} & Observed proportion ($q \leq 861997$)\\
\hline

$(0,0,0,0)$ & $20207/24192 \approx .83527$ &  $\approx.8356$\\
\hline
$(0,0,0,1)$ & $1837/13104 \approx .1401$ & $\approx.1398$\\
\hline
$(0,0,1,1)$ & $1849/170352\approx .01085$ & $\approx.01096$\\
\hline
$(0,0,0,2)$ & $1837/170352 \approx .01078$ & $\approx.01067$\\
\hline
$(0,0,1,2)$ & $167/184548 \approx .0009049$& $\approx.0009239$ \\
\hline
$(0,1,1,1)$ & $11/13182 \approx .00083447$ & $\approx.000860$\\
\hline
$(0,0,0,3)$ & $1837/2214576 \approx .0008295$ & $\approx.0008283$\\
\hline
\end{tabular}
\end{table}

The shape of the Euler factor in Corollary \ref{cor:cyclicpropfixedl} depends on the splitting type of $\ell$ in $\Z \times \O_K$. For the types $(1,1,1,1)$, $(1,1,2)$, and $(1,3)$ we compute the following Euler factors:
\begin{align*}
f_1(\ell) &= 1 - \dfrac{6\ell^7 - 2\ell^6 - 43\ell^5 + 17\ell^4 + 92\ell^3 - 2\ell^2 - 79\ell - 34}{(\ell - 1)^5(\ell + 1)^4}\\
f_2(\ell) &= 1 - \dfrac{(\ell+1)^3(2\ell^7 - 5\ell^5 - 3\ell^4 + 4\ell^2 + 7\ell + 2)}{(\ell - 1)^4(\ell^2 + 1)}\\
f_3(\ell) &= 1 - \frac{\ell^4 - \ell^3 + \ell - 2}{(\ell - 1)^2 (\ell + 1) (\ell^2 - \ell + 1)(\ell^2 + \ell + 1)} 
\end{align*}
Since $2\cdot3\cdot5\cdot37 = 1110$, Conjecture \ref{conj:cyclicityconjecture} predicts that  $J_{61}(q)[1/1110]$ is cyclic with probability 
\[\prod_{ \ell\O_K = \lambda_1\lambda_2\lambda_3} f_1(\ell)\prod_{\ell\O_K = \lambda_1\lambda_2} f_2(\ell)\prod_{ \ell\O_K = (\ell)}f_3(\ell) = .9544\ldots.\]
For primes $q$ up to 861997, we found the proportion to be $\frac{65325}{68492} \approx .9537$.

\bibliography{references}
\bibliographystyle{alpha}

\end{document}